\documentclass[11pt,reqno]{amsart}
\usepackage[T1]{fontenc}
\usepackage{lmodern}
\usepackage{mathtools,amssymb}
\usepackage[expansion=false]{microtype}
\usepackage{booktabs,array}
\usepackage{flafter}
\usepackage[margin=1.1in]{geometry}
\usepackage[hidelinks,unicode]{hyperref}
\hypersetup{pdftitle={Finite deletion-induced saturation for every non-complete graph},pdfauthor={Haochen Liu},pdfsubject={Graph theory; computer-assisted proof}}
\newtheorem{theorem}{Theorem}[section]
\newtheorem{lemma}[theorem]{Lemma}
\newtheorem{proposition}[theorem]{Proposition}
\newtheorem{corollary}[theorem]{Corollary}
\theoremstyle{definition}

\theoremstyle{remark}

\newcommand{\comp}[1]{\overline{#1}}
\newcommand{\Rclass}{\mathcal{R}}
\newcommand{\Dsat}{\mathsf{D}}
\newcommand{\Asat}{\mathsf{A}}

\newcommand{\Sym}{\mathbin{\triangle}}
\newcommand{\id}{\operatorname{id}}
\newcommand{\dom}{\operatorname{dom}}
\newcommand{\wt}{\operatorname{wt}}
\newcommand{\file}[1]{\texttt{\detokenize{#1}}}
\title[Finite deletion-induced saturation]{Finite deletion-induced saturation\break for every non-complete graph}
\author{Haochen Liu}
\address{School of Mathematics and Physics, Qingdao University of Science and Technology, Qingdao 266061, China}
\email{liuhc@mails.qust.edu.cn}
\email{1043564000@qq.com}
\date{September 18, 2026}
\subjclass[2020]{05C35, 05C75; 03C13, 05C25}
\keywords{Induced saturation, edge deletion, finite extensions, partial automorphisms, computer-assisted proof}

\begin{document}
\begin{abstract}
A graph $G$ is deletion-induced-saturated for $H$ if $G$ has an edge, contains no induced copy of $H$, and deleting any edge of $G$ creates an induced copy of $H$. We prove, with finite certificate verification, that a finite graph $H$ admits such a finite graph $G$ if and only if $H$ is not complete. This resolves the deletion conjecture of Fan, Hajebi, Hajebi and Spirkl. The main step transfers suitable free amalgamations to finite extensions using a local lifting theorem of Auinger, Bitterlich and Otto. A second criterion treats edge addition by protecting specified nonedges and then taking a maximal induced-$H$-free completion. Structural results of Bonamy, Groenland, Johnston, Morrison and Scott reduce the remaining targets to dense templates and a finite hereditary class. Two uniform constructions in halved cubes handle the dense templates. The finite part is supported by exhaustive coverage certificates, structural certificates and explicit hosts, including a circulant graph on $30$ vertices.
\end{abstract}
\maketitle

\section{Introduction}

Induced saturation asks for graphs that avoid a prescribed induced subgraph but acquire it after specified changes to their edge set. Martin and Smith~\cite{MS} introduced induced saturation in the language of trigraphs. For ordinary graphs, the symmetric problem requires both every edge deletion and every edge addition to create an induced copy of the target. Behrens, Erbes, Santana, Yager and Yeager~\cite{BE} constructed such hosts for several families, including odd cycles of length at least five. Axenovich and Csik\'os~\cite{AC} developed constructions using Cartesian products of cliques, while Dvo\v{r}\'ak~\cite{Dvorak} settled the path cases $P_n$ for all $n\ge6$.

In the asymmetric version, edge additions and edge deletions are considered separately. Fan, Hajebi, Hajebi and Spirkl~\cite{FHHScycles} proved the existence of finite deletion-saturated hosts for every even cycle. They subsequently formulated a general existence conjecture in~\cite[Conjecture~1.7]{FHHS}: every non-complete graph admits a finite graph which is induced-saturated under edge deletion alone. The exclusion of complete targets is necessary: deleting an edge cannot create a clique that was not already present.

For a finite graph $H$, write $\Dsat(H)$ if there exists a finite graph $G$ such that
\begin{equation}\label{eq:deletion-definition}
 E(G)\ne\varnothing,\qquad G\text{ is induced-}H\text{-free},\qquad
 G-e\text{ contains an induced }H\quad(e\in E(G)).
\end{equation}
Such a graph $G$ will be called a \emph{deletion host} for $H$. The qualification $E(G)\ne\varnothing$ rules out a vacuous witness. Our main result is the following.

\begin{theorem}\label{thm:main}
A finite simple graph $H$ has a finite deletion host if and only if $H$ is not complete.
\end{theorem}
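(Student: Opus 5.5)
The necessity direction is immediate. If $H=K_t$ and $G$ is a deletion host, then for any edge $e$ every clique of $G-e$ is already a clique of $G$, so $G-e$ cannot contain $K_t$ unless $G$ does. Also $K_1$ (and the empty graph) never arise as new induced subgraphs. All the work is in sufficiency: for every non-complete $H$ we must construct a finite $G$ satisfying \eqref{eq:deletion-definition}.

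\textbf{Step 1 (infinite witness).} For a non-complete $H$, fix a nonadjacent pair $u,v$ in $H$ and let $H^{+}=H+uv$. The natural infinite construction is an iterated free amalgamation. Start from any edge, and for every edge $e=xy$ present, glue a copy of $H^{+}$ along $x,y$ (identified with $u,v$), freely over $\{x,y\}$. Repeat countably often. In the limit every edge $e$ lies in a copy of $H^{+}$ in which $e$ plays the role of $uv$, so deleting $e$ yields an induced $H$ on that copy. The free (tree-like) structure keeps the whole graph induced-$H$-free, at least when $H$ is sufficiently well connected, for instance when $H$ is $2$-connected relative to the glue and has no cut pair that permits a copy of $H$ to straddle the amalgam.

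\textbf{Step 2 (finitization).} Replace the infinite amalgam by a finite graph. Here I would invoke the local lifting theorem of Auinger, Bitterlich and Otto on partial automorphisms. Encode the gluing steps as partial isomorphisms between copies of the edge, realized in a finite structure so that every short cycle of gluings is locally acyclic (a finite cover that is locally tree-like up to radius $|V(H)|$). Because an induced copy of $H$ has bounded size, local acyclicity up to that radius shows that any induced $H$ in the finite host would already appear in the free amalgam. The finite host therefore stays $H$-free while keeping, for each edge, a witnessing copy of $H^{+}$.

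\textbf{Step 3 (remaining targets).} The free-amalgamation argument fails when $H$ can be assembled across the glue. Typical bad cases are dense $H$, and $H$ whose complement is structured so that copies straddle gluing pairs. For these I would use the structural reduction of Bonamy, Groenland, Johnston, Morrison and Scott to show that a minimal counterexample is either a dense template (e.g.\ complete multipartite-like, or $K_t$ minus a small graph) or lies in a bounded finite hereditary family. For the dense templates I would build explicit hosts in halved cubes, whose high symmetry makes each edge's deletion verifiable once per edge orbit. The finite residue is checked by computer using explicit hosts, such as circulants found by search.

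The main obstacle is Step 2, together with identifying exactly which $H$ admit an amalgamation in which no copy of $H$ crosses the glue. The lifting theorem must be applied with care so that the finite quotient introduces no new induced $H$ through short cycles of gluings. I would first try to prove a clean criterion: if the free amalgam of $H^{+}$ along $uv$ is induced-$H$-free, then $\Dsat(H)$ holds.
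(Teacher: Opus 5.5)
Your outline has the paper's overall architecture: a two-point free amalgam finitized by the Auinger--Bitterlich--Otto lifting, a Bonamy--Groenland--Johnston--Morrison--Scott reduction, halved cubes, and certified finite cases. What is missing is the link between the amalgamation and the structural reduction, so Step~1 covers only a narrow class of targets. The separator argument excludes a graph from a two-point amalgam only if that graph is connected with no cut vertex and no two-vertex cut of the seam type. A typical target outside $\Rclass$ has cut vertices, pendant trees or adjacent cut pairs, and copies of it can straddle seams. Your ``clean criterion'' (free amalgam of $H^{+}$ is $H$-free $\Rightarrow$ $\Dsat(H)$) is true, but you give no way to verify its hypothesis.

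The missing idea is to exclude a \emph{protector} rather than $H$ itself. Let $C$ be the non-complete $3$-connected $3^{*}$-core, and choose the modified pair $e_0$ \emph{inside} $C$. A change inside the terminal core does not affect the reduction sequence of $H$. Any induced $C$ in the modified graph would survive that sequence and so coincide with the modified core, which has the wrong edge count. Hence $H+e_0$ is $C$-free (Lemma~\ref{lem:core-destruction}). The amalgam is then $C$-free by bag exclusion, so it is $H$-free. Induced lifting carries $C$-freeness to the finite quotient. Witnesses survive in the quotient because every edge of $B$ is a $Q$-translate of a seed edge and the maps $e_0\mapsto f$ extend to automorphisms.

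Second, the structural dichotomy supplies such a core in $H$ \emph{or} in $\comp H$, and you have no mechanism for the second case. Take $H$ the complement of $K_{3,3}$ with a pendant vertex. Its own $3^{*}$-core is empty, and the core $K_{3,3}$ of $\comp H$ appears in $H$ as $2K_3$. Moreover $H$ has no connected non-complete induced subgraph free of cut vertices and adjacent cut pairs, so an edge-amalgam for $H$ has nothing admissible to protect. The paper instead builds an \emph{addition} host for $\comp H$ by amalgamating along nonedges, then uses~\eqref{eq:duality}. Such an amalgam creates many unwitnessed nonedges between bags. The paper therefore marks seed nonedges by a second relation $N$ and carries it through the lifting. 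It then takes a maximal $H$-free completion avoiding $N$ (Lemma~\ref{lem:completion}), which handles every unprotected nonedge by maximality.

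Finally, Step~3 is a placeholder rather than an argument, and the templates you guess (complete multipartite-like, $K_t$ minus a small graph) are not the ones that arise. What is actually needed is:
\begin{enumerate}
\item every $H\in\Rclass$ on at least $12$ vertices has an orientation that is a clique of order at least $10$ plus at most two vertices, each with at most two clique neighbors (Proposition~\ref{prop:dense-reduction}, via a certified bipartite lemma);
\item the case analysis of Lemma~\ref{lem:dense}: $T_M$ in $\Delta_M$, $J_m$ in $\Delta_{m+3}$, severed-pair line graphs, and $C_4$ and $K_3$ protectors;
\item the Fan--Hajebi--Hajebi--Spirkl result for orders at most six;
\item a certified classification for orders $7$ to $11$, including the circulant host for $P_4[2,2,2,2]$.
\end{enumerate}
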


Thus Theorem~\ref{thm:main} proves the finite asymmetric existence conjecture~\cite[Conjecture~1.7]{FHHS}. By complementation it also proves its equivalent edge-addition formulation. The two formulations allow different hosts; they impose no simultaneous addition-and-deletion requirement on a single host. The symmetric existence problem has different obstructions, as illustrated by the $P_4$ result of Martin and Smith~\cite{MS}.

The proof of sufficiency is computer-assisted. Its infinite-to-finite step is a mathematical existence argument, and its remaining finite cases are covered by explicit, exhaustively verified certificates. In particular, the general conclusion does not follow from checking all graphs up to a fixed order.

The finite extension theorem of Auinger, Bitterlich and Otto~\cite[Theorem~4.19]{ABO} supplies the principal tool. It belongs to the theory of extending partial automorphisms, developed for finite graphs by Hrushovski~\cite{Hrushovski} and for finite relational structures by Herwig~\cite{Herwig}, with further finite-extension results of Herwig and Lascar~\cite{HL}. We use the strengthened form with local right inverses to the quotient map from a free extension. These right inverses preserve induced substructures, a feature needed to retain an induced-subgraph exclusion after passage to a finite quotient. The extension of partial automorphisms alone does not provide this local induced-lifting conclusion. Two-point partial automorphisms simultaneously preserve a marked witness for every relevant pair. In the edge-addition direction, a second relation records protected nonedges; a maximal completion then treats all remaining nonedges.

Bonamy, Groenland, Johnston, Morrison and Scott~\cite{BGJMS} studied infinite induced-saturated graphs and established structural reductions based on $3^{*}$-cores. We use these structural results, together with the finite extension argument, to handle any graph for which either it or its complement contains a non-complete $3$-connected induced subgraph on at least five vertices. The remaining hereditary class has a particularly simple form from order $12$ onwards. Two constructions in halved cubes and several known deletion-host constructions settle this form. Finally, an exhaustive augmentation certificate handles orders $7$ through $11$; orders at most $6$ were settled in~\cite{FHHS}.

The paper is organized as follows. Section~\ref{sec:prelim} fixes notation and records the external graph-theoretic inputs. Sections~\ref{sec:finite}--\ref{sec:criteria} establish the finite protection criteria. Sections~\ref{sec:core} and~\ref{sec:dense} carry out the structural reduction and the dense-template analysis. Sections~\ref{sec:exceptions} and~\ref{sec:finitecases} describe the explicit exceptional hosts and the finite certificates. The proof of Theorem~\ref{thm:main} is assembled in Section~\ref{sec:conclusion}. Appendix~\ref{app:repro} specifies the reproducibility package.

\section{Notation and external inputs}\label{sec:prelim}

All graphs are simple and undirected. They are finite except for the free extensions used as intermediate objects. For $S\subseteq V(G)$, the notation $G[S]$ denotes the induced subgraph on $S$. Throughout, \emph{$H$-free} means \emph{induced-$H$-free}. We write $\comp G$ for the complement of $G$, $G\sqcup J$ for disjoint union, and $G\vee J$ for the join. A pair of vertices is treated as an unordered pair when it is an edge or a nonedge.

A \emph{vertex cut} of a connected graph is a set whose removal leaves at least two nonempty components. An \emph{adjacent two-vertex cut} is a cut consisting of the endpoints of an edge; a \emph{nonadjacent two-vertex cut} is defined analogously. A $3$-connected graph has at least four vertices and remains connected after deletion of any set of at most two vertices.

Write $\Asat(H)$ if a finite, non-complete, $H$-free graph $J$ exists such that $J+f$ contains an induced $H$ for every nonedge $f$ of $J$. We call $J$ an \emph{addition host}. Complementation gives
\begin{equation}\label{eq:duality}
 \Asat(H)\quad\Longleftrightarrow\quad\Dsat(\comp H).
\end{equation}
We use this equivalence in both directions, always keeping the exclusion of complete or edgeless hosts in the corresponding definition.

\begin{lemma}[Marked witnesses]\label{lem:marked}
If $G$ is $H$-free and $(G-xy)[S]\cong H$, then $x,y\in S$, and $G[S]$ is obtained from $H$ by adding precisely the edge corresponding to $xy$. The dual statement holds for edge addition.
\end{lemma}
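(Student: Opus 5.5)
The argument is a direct case check on whether the pair $\{x,y\}$ lies inside $S$. I assume, as the statement intends, that $xy\in E(G)$, so that $G-xy$ is defined. Write $G'=G-xy$. The graphs $G$ and $G'$ have the same vertex set and differ only in the single pair $\{x,y\}$.

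First I show $x,y\in S$. Suppose not. Then $\{x,y\}\not\subseteq S$, so the pair $xy$ is not among the pairs inside $S$. Hence $G[S]=G'[S]$ as labelled graphs, and $G[S]\cong H$. This contradicts the assumption that $G$ is $H$-free. So both endpoints lie in $S$.

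Next I identify $G[S]$. Every pair inside $S$ other than $\{x,y\}$ has the same adjacency in $G$ as in $G'$. The pair $\{x,y\}$ is a nonedge of $G'[S]$ and an edge of $G[S]$. Fix an isomorphism $\varphi\colon G'[S]\to H$. The same vertex map identifies $G[S]$ with $H+\varphi(x)\varphi(y)$. Here $\varphi(x)\varphi(y)$ is a nonedge of $H$, because $xy$ is a nonedge of $G'[S]$. So $G[S]$ is obtained from $H$ by adding exactly the edge corresponding to $xy$ under $\varphi$. This is the meaning of ``the edge corresponding to $xy$''; the correspondence depends on the chosen copy $S$ and isomorphism, which is why the paper speaks of a marked witness.

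For the dual statement, suppose $G$ is $H$-free and $(G+xy)[S]\cong H$ for a nonedge $xy$ of $G$. The same argument works with edges and nonedges exchanged. If $\{x,y\}\not\subseteq S$, then $G[S]=(G+xy)[S]$, which is a contradiction. Otherwise $G[S]$ is $H$ with the edge $\varphi(x)\varphi(y)$ deleted. Alternatively, one can apply the deletion version to $\comp G$ and $\comp H$: the graph $\comp G$ is $\comp H$-free, and $\comp{G+xy}=\comp G-xy$, in line with~\eqref{eq:duality}.

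There is no real obstacle here. The only point needing care is to state the correspondence through the chosen isomorphism $\varphi$, so that ``the edge corresponding to $xy$'' is well defined.
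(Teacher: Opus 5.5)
Your proposal is correct and follows the paper's own argument. If $\{x,y\}\not\subseteq S$, then $G[S]=(G-xy)[S]\cong H$, contradicting $H$-freeness; otherwise only the pair $xy$ differs. Fixing the isomorphism $\varphi$ and deriving the dual by complementation are harmless elaborations of that same proof.
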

\begin{proof}
If the two endpoints do not both lie in $S$, the modification does not change $G[S]$, contrary to $H$-freeness. Every other pair in $S$ is unchanged.
\end{proof}

We record the graph-theoretic results from~\cite{FHHS} that will be used below. Here $L(F)$ denotes the line graph of $F$.
\begin{enumerate}
\item Every non-complete graph on at most six vertices has a deletion host~\cite[Theorem~1.9]{FHHS}.
\item A graph with a governing block has a deletion host~\cite[Theorem~4.1]{FHHS}. In the application here, a block $B$ governs a target if it has a deletion host $W$ in which every other block of the target embeds inducedly with any prescribed vertex of that block mapped to any prescribed vertex of $W$.
\item Starting with a $2$-connected graph having a deletion host, successively adding vertices with at most one neighbor among the vertices already present preserves the existence of a deletion host~\cite[Corollary~4.2]{FHHS}.
\item A graph with two leaves at distance $2$ or $3$ has a deletion host~\cite[Theorem~5.1]{FHHS}.
\item Suppose $F$ has two nonadjacent leaves with distinct neighbors, neither lying in a component isomorphic to $K_{1,3}$. Then $\Dsat(L(F))$ holds~\cite[Theorem~5.2]{FHHS}. Such two leaves form a \emph{severed pair}.
\item If $H$ is not a line graph but $H+f$ is a line graph for some nonedge $f$, then $L(K_{2h})$ is a deletion host for $H$, where $h=|V(H)|$~\cite[Lemma~A.4(1)]{FHHS}.
\end{enumerate}
The finite extension theorem from~\cite{ABO} is stated in the form needed in Section~\ref{sec:finite}. The structural inputs from~\cite{BGJMS} are recorded in Section~\ref{sec:core}. For~\cite{ABO,BGJMS,FHHS}, all theorem locators refer to the specific arXiv versions listed in the bibliography, including where a journal version is also available.

\section{Finite extensions and induced lifting}\label{sec:finite}

We regard a graph as a relational structure with a symmetric binary edge relation $E$. At some stages there will also be a symmetric binary relation $N$ for protected nonedges. Initially these relations are irreflexive and disjoint. Membership in $N$ need not describe every nonedge in a later extension.

Let $A$ be a finite relational structure, and let $P$ be a finite list of partial automorphisms of $A$. Use an independent formal generator for each member of $P$, with inverses included as inverse symbols, and let $F$ be the resulting free group. The \emph{free extension} $T=A_PF$ has vertex set obtained from $V(A)\times F$ by the equivalence relation generated by
\begin{equation}\label{eq:free-identification}
 (x,pg)\sim(p(x),g)\qquad(x\in\dom(p),\ p\in P,\ g\in F).
\end{equation}
Each relation of $T$ consists of the images of the corresponding seed relation in all copies $A\times\{g\}$. Right multiplication on the group coordinate gives an action by automorphisms. The analogous construction for a suitable finite quotient group $Q$ is denoted by $A_PQ$.

For a finite relational structure $U$, its \emph{weight} is the number of elements occurring in no relation tuple, plus the total number of relation tuples. Tuples, including both orientations of a symmetric binary relation, are counted with respect to their relation symbols.

\begin{theorem}[Finite local lifting, {\cite[Theorem~4.19]{ABO}}]\label{thm:lifting}
For each positive integer $n$, there is a finite extension $B=A_PQ$ such that the seed $A$ embeds as an induced substructure, the specified partial automorphisms extend to automorphisms of $B$, and the natural map
\[
 \pi:T=A_PF\longrightarrow B=A_PQ
\]
is a relational homomorphism. Every weak substructure $U\subseteq B$ of weight at most $n$ admits a homomorphism $\phi:U\to T$ with
\begin{equation}\label{eq:right-inverse}
 \pi\phi=\id_U.
\end{equation}
Every relation tuple of $B$ lies in a $Q$-translate of a seed relation tuple.
\end{theorem}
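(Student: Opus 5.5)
This statement is the cited Theorem~4.19 of~\cite{ABO}, specialised to our setting, so the plan is to check that its hypotheses and conclusions translate into the form above. I would not reprove the group-theoretic core.

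\emph{Step 1: identify the free extension.} Check that the structure $T=A_PF$ defined by~\eqref{eq:free-identification} agrees with the free amalgam of copies of $A$ glued along the partial automorphisms in~\cite{ABO}. This is a matter of matching conventions for left and right actions and for inverse generators. The action of $F$ by right multiplication is by automorphisms, and each $p\in P$ acts on the copy $A\times\{1\}$ as the identification $(x,p)\sim(p(x),1)$, so it is realised by an automorphism. Every relation tuple of $T$ is, by definition, a translate of a seed tuple.

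\emph{Step 2: choose the finite quotient.} Given $n$, invoke~\cite[Theorem~4.19]{ABO} to obtain a finite group $Q$ generated by the images of $P$. The quotient $F\to Q$ should be sufficiently acyclic (locally free) relative to a bound depending on $n$ and $|A|$. Define $B=A_PQ$ by the same identification with $Q$ in place of $F$. Then:
\begin{itemize}
\item $\pi$ is induced by $F\to Q$, and since relations are images of seed tuples, it is a relational homomorphism.
\item The last clause, that every tuple of $B$ lies in a $Q$-translate of a seed tuple, is immediate from the definition.
\item The partial automorphisms extend to automorphisms of $B$ because the quotient is compatible with the group action.
\item The fact that $A$ embeds inducedly requires that the copy $A\times\{1\}$ is not collapsed in $B$ and acquires no new relations. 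I would derive this from the lifting property applied to substructures supported on $A\times\{1\}$, using $n\ge$ the weight of $A$.
\end{itemize}

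\emph{Step 3: the local right inverse.} This is the substantive content of~\cite{ABO} and the main obstacle. A weak substructure $U$ of weight at most $n$ involves a bounded number of elements and relation tuples. Each tuple comes from a seed tuple in some translate $A\times\{q\}$. Because $Q$ is acyclic up to the relevant size, the coset graph of these translates has no short cycles. I would therefore choose lifts $g\in F$ of the group labels consistently along a spanning forest. Acyclicity guarantees that the chosen lifts also respect the remaining identifications. This defines $\phi$ with $\pi\phi=\id_U$, and $\phi$ maps each tuple to a lifted seed tuple, so $\phi$ is a homomorphism.

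The weight convention, which counts isolated elements and both orientations of tuples, should match the size measure in~\cite{ABO}, at least up to a monotone reparametrisation of $n$. The delicate point I would verify against the arXiv version is that the counting there is by weight and not merely by the number of elements.
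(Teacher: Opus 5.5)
Your overall route is the paper's own. The statement is imported from \cite[Theorem~4.19]{ABO}, and the paper gives no argument beyond that citation. Your bookkeeping in Step~1 and most of Step~2 is also correct. Right multiplication by $p$ sends $(x,1)$ to $(x,p)\sim(p(x),1)$. The map $\pi$ is a homomorphism, and every tuple of $B$ is a $Q$-translate of a seed tuple, both by construction.

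\textbf{The gap: the induced embedding of $A$.} One step you propose fails. The induced embedding of $A$ in $B$ cannot be derived from the right-inverse property, for any $n$. A right inverse $\phi$ only selects \emph{some} preimage of each element of $U$. It is blind to identifications made by $\pi$, because identification chains are not relation tuples. It is also blind to tuples on $\pi(A\times\{1\})$ coming from other translates, since such a tuple simply lifts into another copy of $A$ in $T$.

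Two examples show this.
\begin{enumerate}
\item Take $A=\{x,y\}$ with no relations, $P=\{p\}$ with $p(x)=y$, and $Q$ trivial. Then $A$ embeds in $T$, but $B$ is a single point, and every weak substructure of $B$ lifts with $\pi\phi=\id_U$.
\item Inducedness can fail even when $\pi$ is injective on the seed copy. Let $A$ be an edge $zw$ together with isolated vertices $x,y$. Let $P$ consist of $z\mapsto x$ and $w\mapsto y$, and send both generators to the generator of $Q=\mathbb Z/2$. Then $T$ and $B$ are perfect matchings, so lifting is trivial for every $n$. Yet the non-identity translate of $zw$ becomes an edge between the images of $x$ and $y$ in the identity copy.
\end{enumerate}
So non-collapse and inducedness of the seed copy are a separate conclusion. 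They come from choosing $Q$ compatible with the partial action, and you must take them from the cited theorem itself. Your bullet on extending the partial automorphisms presupposes them as well. This matters: Theorem~\ref{thm:deletion-criterion} uses exactly that $\alpha(V(A))$ induces a copy of $A$ in $B$.

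\textbf{A secondary point: Step~3.} Your sketch understates what is needed. Large girth of $Q$ does not suffice. Two tuples of $U$ sharing an element may lie in translates joined only by a long identification chain. The words needed to close a cycle of tuples in $T$ then have unbounded length and range over cosets of the stabilisers of points under the partial action. Closing such cycles requires a Herwig--Lascar/Ribes--Zalesskii type separation of products of these cosets, which is precisely what \cite{ABO} supplies. Since you defer to that theorem this is harmless, but the spanning-forest argument as written is not a proof.
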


Here a weak substructure may retain only some relation tuples supported on its vertex set. We apply the theorem to the full induced relational structure on that set.

\begin{lemma}[Induced lifting]\label{lem:induced-lifting}
If $U$ is the full induced substructure of $B$ on a vertex set of weight at most $n$, the map $\phi$ in Theorem~\ref{thm:lifting} is an induced embedding.
\end{lemma}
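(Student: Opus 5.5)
The plan is to get all three properties of an induced embedding (injectivity, preservation of relations, reflection of relations) directly from the two facts Theorem~\ref{thm:lifting} provides: $\phi$ is a homomorphism $U\to T$, and $\pi$ is a relational homomorphism $T\to B$ satisfying \eqref{eq:right-inverse}. No further structural information about the free extension $T$ should be needed.

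First, I check that the theorem applies. $U$ is the full induced substructure on a vertex set of weight at most $n$, and a full induced substructure is in particular a weak substructure. Hence Theorem~\ref{thm:lifting} supplies a homomorphism $\phi\colon U\to T$ with $\pi\phi=\id_U$.

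\emph{Injectivity.} If $\phi(u)=\phi(v)$, then applying $\pi$ gives $u=\pi\phi(u)=\pi\phi(v)=v$.

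\emph{Preservation.} Every relation tuple of $U$ is sent by $\phi$ to a tuple of the same relation in $T$, because $\phi$ is a homomorphism. This covers both $E$ and, when present, $N$.

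\emph{Reflection.} Suppose $R$ is a relation symbol ($E$ or $N$) and $(\phi(u),\phi(v))\in R^T$ for some $u,v\in U$. Since $\pi$ is a homomorphism, $(\pi\phi(u),\pi\phi(v))=(u,v)\in R^B$. As $U$ is the \emph{full} induced substructure of $B$ on its vertex set, every $R$-tuple of $B$ supported on that set already belongs to $R^U$. Hence $(u,v)\in R^U$. The same argument works verbatim for tuples of any arity.

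Combining the three points, $\phi$ is an isomorphism from $U$ onto the induced substructure $T[\phi(V(U))]$, that is, an induced embedding.

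I expect essentially no obstacle. The one point to state carefully is the contrast with the weak case: for an arbitrary weak substructure, the reflection step fails, since $B$ could have tuples on the vertex set that are omitted from $U$. Fullness of $U$ is therefore exactly the hypothesis that converts the lifted homomorphism into an induced embedding. This is the feature used later to transfer induced-$H$-freeness from $T$ down to $B$.
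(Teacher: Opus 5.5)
Your proposal is correct and follows the paper's argument essentially verbatim: injectivity from $\pi\phi=\id_U$, preservation because $\phi$ is a homomorphism, and reflection because $\pi$ is a homomorphism and $U$ contains every tuple that $B$ induces on its vertex set. Your remark that fullness of $U$ is exactly what makes the reflection step work matches the paper's reasoning.
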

\begin{proof}
Equation~\eqref{eq:right-inverse} makes $\phi$ injective, and its homomorphism property preserves every relation tuple of $U$. If a tuple of elements of $\phi(U)$ belonged to a relation in $T$ while its preimage tuple did not belong to that relation in $U$, the homomorphism $\pi$ would send it to the missing tuple in $B$. This contradicts the assumption that $U$ contains all relations induced by $B$ on its vertices.
\end{proof}

For our applications, at most two binary relations are present and at most $h$ vertices must be lifted. If $r$ vertices occur in no relation tuple, then
\[
 \wt(U)\le r+2(h-r)^2\le 2h^2\qquad(h\ge1).
\]
This bound allows both loops and overlaps between the relations before they have been ruled out. We may therefore use
\begin{equation}\label{eq:weight-bound}
 n=2h^2+4.
\end{equation}
Once the free extension is known to be loopless and to have disjoint $E$ and $N$, lifting substructures on one or two vertices shows the same properties for $B$. More generally, an induced graph on at most $h$ vertices in the underlying $E$-graph of $B$ lifts inducedly to the underlying $E$-graph of $T$, by retaining the full induced structure in all relation symbols before applying the lemma.

\section{Two-point amalgamation and finite protection}\label{sec:criteria}

\subsection{The tree of seed copies}

Let $A$ be a finite graph and let $e_0$ be a specified adjacent pair or a specified nonadjacent pair. For every other pair $f$ of the same type, choose a bijection $p_f:e_0\to f$. These are partial automorphisms. When nonedges are protected, every seed pair has exactly one of the two relations $E,N$, and $p_f$ preserves both relations.

The left Cayley graph of the free group on the symbols $p_f$ is a tree, with edges joining $g$ to $p_fg$. In the free extension, the corresponding seed copies $A_g$ and $A_{p_fg}$ are identified along the two endpoints prescribed by $p_f$. The following properties justify the separator arguments used below.

\begin{lemma}\label{lem:tree-structure}
The seed copies in this two-point free extension satisfy the following properties.
\begin{enumerate}
\item Every $A_g$ embeds inducedly; distinct vertices within a seed copy are not identified.
\item The set of bags containing a fixed quotient vertex is a connected subtree of the index tree.
\item Cutting an edge of the index tree separates its bags into two sides whose vertex sets intersect only in the two-vertex seam. No graph edge joins vertices exclusive to opposite sides.
\item In the two-relation construction, $E$ and $N$ are irreflexive and disjoint throughout the free extension.
\end{enumerate}
\end{lemma}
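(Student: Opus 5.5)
The plan is to make the identification relation \eqref{eq:free-identification} explicit and read off all four properties from the tree structure of the left Cayley graph. First, the identification is analysed one tree edge at a time. The generating relation $(x,p_f g)\sim(p_f(x),g)$ identifies vertex $x\in e_0$ of the copy $A_{p_fg}$ with vertex $p_f(x)\in f$ of the copy $A_g$. So each tree edge $\{g,p_fg\}$ glues exactly the two points of $e_0$ in one bag to the two points of $f$ in the neighbouring bag, and nothing else. A quotient vertex is therefore an equivalence class of pairs $(x,g)$. Two such pairs are equivalent exactly when they are joined by a path in the tree along which each successive gluing carries the current point onward. This immediately gives (2): the bags containing the class form the set of tree vertices reached by such a path, and this set is connected because every step moves along a tree edge.

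For (1), the claim is that two distinct $x\ne y$ in one copy $A_g$ are never identified. Suppose $(x,g)\sim(y,g)$. Then there is a closed walk in the tree from $g$ back to $g$ that transports $x$ to $y$. Since the Cayley graph of a free group is a tree, this walk can be reduced by cancelling backtracks $g\to p_fg\to g$. Each backtrack applies a bijection $p_f$ and then its inverse, so it acts as the identity on the point being carried. The reduced walk is empty, so $x=y$. Here it matters that $p_f$ is a genuine bijection between two-element sets and that the partial maps compose coherently; I expect to phrase this as: the point carried along a path depends only on the reduced word. Inducedness of $A_g$ then follows. Every edge (and $N$-pair) of the extension is the image of a seed pair in some single copy $A_{g'}$. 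If its endpoints both lie in $A_g$, then $g'$ and $g$ are connected through bags containing both endpoints. Transporting along the tree path between them, using (2) and the fact that each gluing is a partial automorphism preserving $E$ and $N$, shows that the pair is already an edge of $A_g$.

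For (3), cut a tree edge $\{g,p_fg\}$ into two subtrees. By (2), a vertex lying in bags on both sides must lie in both $A_g$ and $A_{p_fg}$, so it is one of the two glued seam points. Every relation tuple lies inside a single bag, and every bag lies on one side. Hence no edge joins a vertex exclusive to one side with a vertex exclusive to the other.

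For (4), every relation tuple is the image of a seed tuple of a single copy. By (1), distinct seed vertices stay distinct, so no loops arise. If some pair carried both $E$ and $N$, the two tuples would come from bags $A_{g_1}$ and $A_{g_2}$ that both contain both endpoints. Transporting along the tree path between them, via (2) and the relation-preserving maps $p_f$, would produce a seed pair lying in both $E$ and $N$, which contradicts the disjointness of the seed relations. The main obstacle is the transport argument in (1) and (4). One must check carefully that the bags containing both endpoints of a pair form a connected subtree, which is the intersection of two subtrees and hence again a subtree, and that transporting along it preserves the relation type.
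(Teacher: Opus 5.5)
Your proposal is correct and follows essentially the same route as the paper. You cancel backtracks in the projected tree walk to rule out identifications inside a bag, use connectedness of the bag sets to get the seam separation, and transport a relation along the tree path, both endpoints crossing each seam together, to obtain inducedness, irreflexivity and disjointness of $E$ and $N$.

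One step is stated without justification: that a vertex lying in both bags of a cut edge is one of the two seam points. Your argument for (2) only shows that such a vertex lies in both bags, not that it lies in the seam. This follows from the reduced-walk principle you formulate for (1): any chain between the two occurrences reduces to a single crossing of that edge. That is exactly how the paper obtains it, so you should invoke the principle explicitly there.
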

\begin{proof}
An identification chain projects to a walk in the index tree. A chain starting and ending in the same bag projects to a closed walk. Successively cancelling backtracking pairs in this walk cancels mutually inverse partial bijections, so its initial and final coordinates agree. Thus no bag acquires identifications between distinct vertices.

If a quotient vertex lies in two bags, an identification chain joins those occurrences. Cancelling backtracking shows that it occurs in every bag along the unique tree path between them. This proves the connectedness assertion and also shows that a vertex appearing on both sides of a cut must belong to its seam.

Every graph edge comes from a single bag. Consequently an edge cannot join vertices exclusive to opposite sides. To see that a bag receives no additional internal relation, suppose a different bag supplies a relation between two of its vertices. Both vertices occur along the entire path joining the two bags. They therefore cross each seam together. The two-point identifications preserve the relation on that seam, so the relation was present in the original bag as well. The same argument for each of $E$ and $N$ proves inducedness and excludes conflicts; the absence of internal identifications excludes loops.
\end{proof}

\begin{lemma}[Bag exclusion]\label{lem:bag-exclusion}
Suppose $C$ is connected, has no cut vertex, and has no two-vertex cut of the seam type. If $A$ is $C$-free, then its two-point free extension is $C$-free.
\end{lemma}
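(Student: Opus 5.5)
Suppose, for contradiction, that the two-point free extension $T$ contains an induced copy of $C$ on a finite vertex set $S$. The plan is to push $S$ into a single bag $A_g$. Since $A_g$ embeds inducedly in $T$ by Lemma~\ref{lem:tree-structure}(1), this would give an induced copy of $C$ in $A$, a contradiction.

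Because $S$ is finite, each vertex of $S$ lies in a connected subtree of bags (Lemma~\ref{lem:tree-structure}(2)). Each of the finitely many edges of $C$ lies in some bag, since every graph edge comes from a single bag. Hence only a finite subtree $\mathcal T$ of the index tree is relevant. Choose a minimal finite subtree $\mathcal T$ such that every vertex of $S$ occurs in some bag of $\mathcal T$ and every edge of $T[S]$ is realized in some bag of $\mathcal T$. Argue by induction on the number of nodes of $\mathcal T$. If $\mathcal T$ is a single bag, then $S\subseteq A_g$ and we are done.

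Otherwise, take a leaf bag $A_\ell$ of $\mathcal T$ joined to its neighbour by a tree edge with seam $\sigma$, which has two vertices. Cut this edge. By Lemma~\ref{lem:tree-structure}(3), $S$ splits as $S_1\cup S_2\cup(S\cap\sigma)$, where $S_1$ consists of vertices exclusive to the leaf side, $S_2$ of vertices exclusive to the other side, and no edge of $T$ joins $S_1$ to $S_2$.

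Now split into cases.
\begin{itemize}
\item If $S_1=\varnothing$, then every vertex of $S$ lies on the other side, since seam vertices appear in both bags. Every edge of $T[S]$ inside $\sigma\cup S_2$ is realized on that side: an edge realized only in $A_\ell$ has both endpoints in $\sigma$, and the two-point identification preserves the relation on the seam. So $A_\ell$ can be deleted, contradicting minimality.
\item If $S_1\neq\varnothing$ and $S_2\neq\varnothing$, then $S\cap\sigma$ separates $C$.
 \begin{itemize}
 \item If $|S\cap\sigma|\le1$, this gives a disconnection or a cut vertex, which is excluded because $C$ is connected and has no cut vertex.
 \item If $|S\cap\sigma|=2$, the seam is an adjacent or nonadjacent pair, according to the type of $e_0$. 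Lemma~\ref{lem:tree-structure}(1),(4) shows that its adjacency in $T$ equals its adjacency in the seed. So $S\cap\sigma$ is a two-vertex cut of $C$ of the seam type, which is excluded.
 \end{itemize}
\item Hence $S_2=\varnothing$. By the same argument with the roles of the two sides swapped, the entire set $S$ lies on the leaf side, which is the single bag $A_\ell$. We are then in the base case.
\end{itemize}

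The delicate point is the claim that the seam pair induces in $C$ exactly the relation of the seed pair $e_0$. This is what makes ``of the seam type'' meaningful. It relies on the inducedness in Lemma~\ref{lem:tree-structure}(1), which ensures that no bag acquires an extra edge between the seam vertices. The second point needing care is the minimality bookkeeping: the induction must shrink $\mathcal T$ while keeping both all vertices of $S$ and all edges of $T[S]$ realized in bags of $\mathcal T$. Connectedness of $C$ also rules out $S$ being split with an empty seam intersection.
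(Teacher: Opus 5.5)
Your proof is correct and follows essentially the same route as the paper: you take a minimal finite subtree of bags covering the copy of $C$, cut at a leaf bag, and use the seam intersection as a separating set of size zero, one, or two (of the seam type), excluded respectively by connectedness, the absence of a cut vertex, and the hypothesis on two-vertex cuts. Requiring edges to be realized in the minimality condition is harmless but redundant, since by Lemma~\ref{lem:tree-structure} an edge between vertices covered by the subtree is already realized in one of its bags. In the case $S_2=\varnothing$, no role-swapping is needed: you get $S\subseteq V(A_\ell)$ directly, and inducedness of $A_\ell$ gives the contradiction.
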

\begin{proof}
Suppose an induced copy of $C$ exists in the extension. Choose a finite connected subtree with the fewest bags whose union covers its vertices. A single bag contradicts the $C$-freeness of $A$. Otherwise, cut the tree edge incident with a leaf bag. Minimality implies that the copy has a vertex exclusive to the leaf side and a vertex exclusive to the opposite side. Its intersection with the seam is therefore a separating set. By Lemma~\ref{lem:tree-structure}, this set has size zero, one, or two; in the last case its type agrees with the seam. These alternatives contradict, respectively, connectedness, the absence of a cut vertex, and the assumed exclusion of the relevant two-vertex cut. The separation is in the full index tree, so bags outside the chosen finite subtree cannot provide an edge between the two sides.
\end{proof}

\subsection{Completing protected nonedges}

\begin{lemma}[Protected completion]\label{lem:completion}
Let $E$ and $N$ be disjoint sets of unordered pairs on a finite set $V$, with $N\ne\varnothing$. Suppose $(V,E)$ is $H$-free. Assume that each $f\in N$ has a witness on a vertex set $S_f$ for which every pair is specified in $E$ or $N$, and adding $f$ to the graph on $S_f$ gives an induced copy of $H$. Then $\Asat(H)$ holds.
\end{lemma}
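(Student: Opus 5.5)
We will prove the lemma by a maximal-completion argument. Among all sets $E'$ of pairs on $V$ with $E\subseteq E'$, $E'\cap N=\varnothing$ and $(V,E')$ $H$-free, choose one that is maximal under inclusion. Such an $E'$ exists because $V$ is finite and $E'=E$ is admissible by hypothesis. Let $J=(V,E')$. We claim that $J$ is an addition host for $H$. Since $N\ne\varnothing$ and $N\cap E'=\varnothing$, the graph $J$ has at least one nonedge, so it is non-complete. By construction it is $H$-free. It remains to check that $J+f$ contains an induced $H$ for every nonedge $f$ of $J$.

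We split the nonedges of $J$ into two kinds.

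\emph{Nonedges outside $N$.} Suppose $f\notin E'$ and $f\notin N$. Then $E'\cup\{f\}$ is disjoint from $N$ and contains $E$. Maximality of $E'$ therefore forces $(V,E'\cup\{f\})=J+f$ to contain an induced $H$, which is what we need.

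\emph{Protected nonedges $f\in N$.} Here we use the witness $S_f$. Every pair inside $S_f$ lies in $E$ or in $N$ by hypothesis. Pairs in $E$ remain edges of $J$ because $E\subseteq E'$. Pairs in $N$ remain nonedges because $E'\cap N=\varnothing$. Hence $J[S_f]$ coincides with the graph on $S_f$ specified by $E$, and $(J+f)[S_f]$ coincides with that specified graph with $f$ added. By hypothesis this is an induced copy of $H$.

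The key point, which is the only delicate step, is exactly this last one. The maximal completion may add arbitrary edges elsewhere, but the witnesses stay intact because every pair inside each $S_f$ is frozen, either by $E\subseteq E'$ or by the disjointness from $N$. This is why the hypothesis requires every pair in $S_f$ to be specified. We should also confirm that $f$ itself has both endpoints in $S_f$, which holds because adding $f$ changes the graph on $S_f$. The conclusion $\Asat(H)$ then follows directly from the definition of an addition host.
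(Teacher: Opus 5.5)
Your proof is correct and follows the paper's argument. The paper chooses $J$ with the maximum number of edges among $H$-free graphs satisfying $E\subseteq E(J)\subseteq\binom V2\setminus N$, whereas you take an inclusion-maximal one; either choice gives the two properties you use, namely frozen witnesses on each $S_f$ for $f\in N$, and maximality forcing an induced $H$ in $J+f$ for every other nonedge $f$.
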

\begin{proof}
Choose a graph $J$ with the maximum number of edges among all $H$-free graphs satisfying
\[
 E\subseteq E(J)\subseteq\binom V2\setminus N.
\]
The set of choices is finite and nonempty. If a nonedge $f$ of $J$ lies in $N$, all pairs of the specified witness $S_f$ retain their original status, so $J+f$ contains an induced $H$. If $f\notin N$, then $J+f$ cannot be $H$-free, by maximality. Finally, $N\ne\varnothing$ ensures that $J$ is not complete.
\end{proof}

The argument does not require induced-$H$-freeness to be monotone under edge addition. An implementation which adds allowable edges must continue until none remain allowable; a single scan need not suffice.

\subsection{The two criteria}

\begin{theorem}[Deletion criterion]\label{thm:deletion-criterion}
Suppose $H$ contains a connected induced subgraph $C$ having neither a cut vertex nor an adjacent two-vertex cut. If $H$ has a nonedge $e_0$ such that $H+e_0$ is $C$-free, then $\Dsat(H)$ holds.
\end{theorem}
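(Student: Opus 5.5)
We take the seed $A=H+e_0$ with the specified adjacent pair $e_0$, form the two-point free extension $T$ using all adjacent pairs $f$ of $A$ and bijections $p_f:e_0\to f$, and pass to a finite quotient $B=A_PQ$ via Theorem~\ref{thm:lifting} with $n=2h^2+4$, where $h=|V(H)|$. The candidate deletion host is the underlying graph $G=B$. It has an edge, since $A$ embeds inducedly and contains $e_0$.

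\textbf{$G$ is $H$-free.} The seed $A=H+e_0$ is $C$-free by hypothesis. The seams of $T$ are adjacent pairs, and $C$ has neither a cut vertex nor an adjacent two-vertex cut. Lemma~\ref{lem:bag-exclusion} therefore shows that $T$ is $C$-free. Now suppose $B$ contained an induced $H$ on a set $S$ with $|S|=h$. By Lemma~\ref{lem:induced-lifting}, applied with the weight bound~\eqref{eq:weight-bound}, $B[S]$ would lift to an induced copy of $H$ in $T$. Since $C$ is an induced subgraph of $H$, this copy would contain an induced $C$ in $T$, a contradiction. Hence $B$ is $H$-free. Lemma~\ref{lem:tree-structure} guarantees that $T$ is loopless, and the lifting remark of Section~\ref{sec:finite} transfers this to $B$, so $G$ is a simple graph.

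\textbf{Deleting any edge creates an induced $H$.} Let $xy$ be an edge of $B$. By the last clause of Theorem~\ref{thm:lifting}, $xy$ lies in a $Q$-translate of a seed edge $f$ of $A$. The partial automorphism $p_f$ extends to an automorphism of $B$ mapping $e_0$ to $f$, and $Q$ acts by automorphisms. Composing these gives an automorphism $\alpha$ of $B$ with $\alpha(e_0)=xy$. The seed copy $A$ is induced in $B$, so $B[V(A)]\cong H+e_0$, and $(B-e_0)[V(A)]\cong H$. Applying $\alpha$ yields an induced $H$ in $B-xy$. Thus $B$ is a deletion host, and $\Dsat(H)$ holds.

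\textbf{Main obstacle.} The delicate step is moving $H$-freeness from $T$ to the finite $B$. This is exactly where the local right inverses are needed, rather than mere extension of partial automorphisms. One must check that the induced-lifting lemma applies to full induced substructures on at most $h$ vertices within the chosen weight bound. A second point needs care: every edge of $B$ must be an automorphic image of $e_0$. For this we use that every relation tuple of $B$ lies in a translate of a seed tuple, and that all $p_f$ extend to automorphisms of $B$.
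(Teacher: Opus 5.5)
Your proposal is correct and follows the paper's proof essentially step for step. You use the same seed $A=H+e_0$, Lemma~\ref{lem:bag-exclusion} for $T$, induced lifting for $H$-freeness of $B$, and the extended $p_f$ composed with a $Q$-translation to carry $e_0$ to an arbitrary edge. The differences are cosmetic: you lift an induced $H$ and find $C$ inside it, whereas the paper lifts an induced $C$ directly, and the paper separately remarks on the trivial cases $f=e_0$ (use the identity) and a seed with only one edge.
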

\begin{proof}
Put $A=H+e_0$ and map the marked edge $e_0$ to each other edge of $A$ by a two-point partial automorphism. The resulting free extension $T$ is $C$-free by Lemma~\ref{lem:bag-exclusion}. Apply Theorem~\ref{thm:lifting} with the bound~\eqref{eq:weight-bound}, where $h=|V(H)|$. An induced $C$ in the finite extension $B$ would lift inducedly to $T$, by Lemma~\ref{lem:induced-lifting}. Thus $B$ is $C$-free and hence $H$-free.

Every edge $e$ of $B$ is a translate of a seed edge $f$. The extended partial automorphism taking $e_0$ to $f$, followed by that translation, is an automorphism $\alpha$ of $B$ taking $e_0$ to $e$. For $f=e_0$, use the identity in the first step. Since $A$ embeds inducedly,
\[
 (B-e)[\alpha(V(A))]\cong A-e_0=H.
\]
This proves the required property for every edge of $B$. The seed edge $e_0$ ensures $E(B)\ne\varnothing$. If $A$ has only one edge, it already is the required host, so the construction with no generators causes no exception.
\end{proof}

\begin{theorem}[Addition criterion]\label{thm:addition-criterion}
Suppose $H$ contains a connected induced subgraph $C$ having neither a cut vertex nor a nonadjacent two-vertex cut. If $H$ has an edge $e_0$ such that $H-e_0$ is $C$-free, then $\Asat(H)$ holds.
\end{theorem}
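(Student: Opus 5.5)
We mirror the proof of Theorem~\ref{thm:deletion-criterion}, now in the two-relation setting, and finish with Lemma~\ref{lem:completion} instead of a single automorphism argument.

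\textbf{Seed.} Put $A=H-e_0$, viewed as a structure with the edge relation $E$ and with $N$ consisting of \emph{all} nonedges of $A$. Thus every pair of $A$ lies in exactly one of $E,N$, and the marked pair $e_0$ lies in $N$. For every other pair $f\in N$, choose a bijection $p_f:e_0\to f$. Each $p_f$ is a two-point partial automorphism preserving both relations. Form the two-point free extension $T$. By Lemma~\ref{lem:tree-structure}, $T$ has irreflexive, disjoint relations, and every bag is an induced copy of $A$.

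\textbf{Exclusion.} The seams are now nonadjacent pairs, and in a seam the pair carries $N$ rather than $E$. The bag-exclusion argument of Lemma~\ref{lem:bag-exclusion} therefore yields a separating set in an induced $C$ of size zero, one, or a nonadjacent pair. These are excluded by connectedness, by the absence of a cut vertex, and by the absence of a nonadjacent two-vertex cut, respectively. Since $A$ is $C$-free by hypothesis, the $E$-graph of $T$ is $C$-free.

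Next, apply Theorem~\ref{thm:lifting} with $n$ as in~\eqref{eq:weight-bound}. An induced $C$ in the $E$-graph of $B$ is taken with its full induced structure in both relation symbols. Lemma~\ref{lem:induced-lifting} lifts it to an induced $C$ in $T$, which is impossible. Hence $(V(B),E_B)$ is $C$-free and so $H$-free. Lifting one- and two-vertex substructures transfers looplessness and disjointness of $E_B$ and $N_B$.

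\textbf{Witnesses and completion.} Every $N_B$-pair $f$ is a $Q$-translate of a seed $N$-pair $f'$. The extended automorphism taking $e_0$ to $f'$, composed with the translation, gives an automorphism $\alpha$ of $B$ with $\alpha(e_0)=f$. Put $S_f=\alpha(V(A))$. Because $A$ embeds inducedly in both relations, every pair of $S_f$ is specified in $E_B$ or $N_B$. Adding $f$ on $S_f$ gives $A+e_0=H$. Also $N_B\ne\varnothing$ because $e_0\in N$.

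Lemma~\ref{lem:completion}, applied with $V=V(B)$, $E=E_B$ and $N=N_B$, then gives $\Asat(H)$. The degenerate case with no generators, where $e_0$ is the only nonedge of $A$, is covered by the same argument.

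\textbf{Main obstacle.} The delicate step is adapting Lemma~\ref{lem:bag-exclusion} to seams that are nonadjacent pairs. The lemma as stated concerns ``the seam type''; here that type is nonadjacent, and we must check that $E$-adjacency across seams behaves exactly as in Lemma~\ref{lem:tree-structure}(3). A second point to verify is that the witness sets $S_f$ are genuinely fully specified in $B$, not merely in $T$. This follows because the automorphism $\alpha$ maps the induced seed copy, complete in both relations, onto $S_f$.
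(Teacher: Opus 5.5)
Your proposal is correct and follows essentially the same route as the paper. Both arguments use the seed $A=H-e_0$ with every nonedge protected in $N$, two-point partial automorphisms out of $e_0$, bag exclusion with nonadjacent seams, and induced lifting to the finite quotient $B$; the transported seed copies then serve as fully specified witnesses, and Lemma~\ref{lem:completion} finishes. The only cosmetic difference is that the paper treats the case where $e_0$ is the only nonedge of $A$ by observing directly that $A$ is already an addition host, whereas you let the general argument cover it, which is equally valid.
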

\begin{proof}
Put $A=H-e_0$. Specify every pair of $A$ as an edge in $E$ or as a protected nonedge in $N$. Map the marked nonedge $e_0$ to every other member of $N$ by a two-point partial automorphism. The underlying $E$-graph of the free extension is $C$-free by Lemma~\ref{lem:bag-exclusion}, now with nonadjacent seams.

Theorem~\ref{thm:lifting} and Lemma~\ref{lem:induced-lifting} produce a finite extension $B$ with disjoint, irreflexive relations $E_B,N_B$, whose underlying $E_B$-graph is $C$-free and hence $H$-free. Each member $f$ of $N_B$ is a translate of a seed nonedge. As in the preceding proof, an automorphism maps $e_0$ to $f$ and transports the entire seed $A$ to a fully specified witness for adding $f$. Since $N_B$ contains the seed nonedge, Lemma~\ref{lem:completion} applies. If $A$ has only one nonedge, it is already an addition host.
\end{proof}

\begin{corollary}\label{cor:whole-graph}
Every connected non-complete graph with no cut vertex and no adjacent two-vertex cut has a finite deletion host. Every connected graph with at least one edge, no cut vertex and no nonadjacent two-vertex cut has a finite addition host.
\end{corollary}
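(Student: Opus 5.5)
The plan is to apply the two criteria with $C=H$ itself. For the deletion statement, let $H$ be connected and non-complete, with no cut vertex and no adjacent two-vertex cut. Then $C=H$ is an induced subgraph of $H$ satisfying the hypotheses of Theorem~\ref{thm:deletion-criterion}. Since $H$ is not complete, it has a nonedge $e_0$. I need $H+e_0$ to be $H$-free. This is immediate by counting: $H+e_0$ has the same number of vertices as $H$ and one more edge, so it cannot be isomorphic to $H$. An induced copy of $H$ in a graph on $|V(H)|$ vertices must use all vertices, so $H+e_0$ is $H$-free. Theorem~\ref{thm:deletion-criterion} then yields $\Dsat(H)$.

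For the addition statement, let $H$ be connected with at least one edge, no cut vertex and no nonadjacent two-vertex cut. Again take $C=H$, and take $e_0$ to be any edge of $H$. The graph $H-e_0$ has the vertex count of $H$ and one fewer edge, so it is $H$-free. Theorem~\ref{thm:addition-criterion} gives $\Asat(H)$.

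The only points to double-check are degenerate ones. The criteria require $C$ to be connected with no cut vertex. For very small $H$, such as $K_1$ or $K_2$, the hypotheses are either vacuous or excluded. For $K_1$, the deletion case is excluded since $K_1$ is complete, and the addition case is excluded since $K_1$ has no edge. For $K_2$, the deletion case is complete and excluded. The addition case for $K_2$ is covered by the criterion directly with $A=H-e_0=\overline{K_2}$, which has one nonedge and is already a host, as noted at the end of that proof. I expect no real obstacle. The main care is that the definition of cut used in the criteria matches the hypothesis, which holds by construction since the hypotheses are stated verbatim with $C=H$.
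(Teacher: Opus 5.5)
Your proposal is correct and matches the paper's proof: take $C=H$ in the relevant criterion, and note that changing one pair alters the edge count on the same vertex set, so the modified seed cannot contain an induced $H$. Your handling of the degenerate cases $K_1$ and $K_2$ is also right; it agrees with the remark at the end of the addition criterion's proof that a seed with a single nonedge is already an addition host.
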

\begin{proof}
Use $C=H$ in the relevant criterion. Changing one edge changes the edge count on the unchanged vertex set, so the modified seed cannot contain an induced $H$.
\end{proof}

In particular, complete graphs of order at least two are permitted in the addition criterion. The two criteria do not assert that a common host can satisfy both addition and deletion saturation.

\section{Reduction to a hereditary residual class}\label{sec:core}

\subsection{The $3^{*}$-core reduction}

A $3^{*}$-reduction repeatedly performs the following operations on the current induced graph, exhausting the first operation before applying the second:
\begin{enumerate}
\item delete a vertex of degree at most two;
\item delete adjacent vertices $a,b$ satisfying
\[
 N(a)=\{b,r,s\},\qquad N(b)=\{a,r,s\}
\]
for distinct $r,s$ outside $\{a,b\}$.
\end{enumerate}
The definition in~\cite{BGJMS} allows the deletion of a pair $a,b$ whenever $|N[a]\cup N[b]|\le4$. After the low-degree deletions have been exhausted, every vertex has degree at least three. An eligible pair then has $N[a]=N[b]$ of size four, so it is precisely an adjacent degree-three true-twin pair as in the second operation. Thus the terminal graph is the $3^{*}$-core in the terminology of~\cite{BGJMS}. For our direct argument, only the existence of a reduction sequence is needed.

\begin{lemma}\label{lem:core-destruction}
Suppose a $3^{*}$-reduction takes $H$ to a non-complete $3$-connected induced graph $C$. Adding any nonedge of $C$, or deleting any edge of $C$, makes the resulting graph on $V(H)$ $C$-free.
\end{lemma}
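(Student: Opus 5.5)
The plan is to argue by contradiction and show that any induced copy of $C$ in the modified graph $H'$ would survive the whole $3^{*}$-reduction, so that the reduction ends in a graph containing a copy of $C$ with one pair changed. Fix a pair $f\subseteq V(C)$; let $H'$ be $H$ with $f$ toggled, and $C'$ be $C$ with $f$ toggled. Suppose $S\subseteq V(H)$ satisfies $H'[S]\cong C$. I will show that $S$ is never deleted when the same reduction sequence that takes $H$ to $C$ is replayed on $H'$. Replaying the operations needs some care, because toggling $f$ changes degrees only at the endpoints of $f$, and these endpoints lie in $V(C)$ and are never deleted. Every deleted vertex therefore has the same neighbourhood in $H'$ as in $H$ at every stage. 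The same sequence of deletions is still legal in $H'$ (degree at most two, or an adjacent degree-three true-twin pair), and it ends at $C'$.

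The key invariant is the following. The graph $H'[S]\cong C$ is $3$-connected, hence has minimum degree at least three. It also contains no adjacent pair $a,b$ with $N[a]=N[b]$ of size four that is separated from the rest. Such a pair would have neighbourhood $\{r,s\}$ outside itself, so $\{r,s\}$ would be a two-vertex cut unless $|V(C)|=4$, and a $4$-vertex $3$-connected graph is $K_4$, which is complete. Now consider the first operation in the replayed sequence that deletes a vertex of $S$, and suppose it deletes $v\in S$ because $v$ has degree at most two in the current graph. The current graph contains all of $S$, so the degree of $v$ is at least its degree in $H'[S]$, which is at least three. This is a contradiction.

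Suppose instead that the operation deletes a twin pair $a,b$ with $a\in S$. Then $a$ has degree exactly three in the current graph and at least three inside $S$, so $N(a)=\{b,r,s\}\subseteq S$ and in particular $b\in S$. By the same argument $N(b)=\{a,r,s\}\subseteq S$. Then $\{r,s\}$ separates $\{a,b\}$ from $S\setminus\{a,b,r,s\}$ in $H'[S]$. That set is nonempty, since $|S|=|V(C)|\ge5$ because $C$ is $3$-connected and non-complete. This contradicts $3$-connectivity. Hence $S\subseteq V(C)$, and $C'[S]=H'[S]\cong C$. Since $|S|=|V(C)|$, we get $S=V(C)$ and $C'\cong C$. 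This is impossible, because $C'$ and $C$ have different numbers of edges.

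The main obstacle is the replay step. I must check that the legality of each operation depends only on the neighbourhoods of the deleted vertices in the current graph, and that these neighbourhoods are unaffected by toggling $f$ because $f\subseteq V(C)$ survives. It is also essential to use the order imposed by the definition, with low-degree deletions exhausted first. Because the twin condition $N(a)=\{b,r,s\}$ involves only the neighbourhoods of $a$ and $b$, it is unchanged by the toggle, so the sequence transfers verbatim.
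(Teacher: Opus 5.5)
Your proof is correct and follows essentially the same argument as the paper: the edit lies inside $V(C)$, so the reduction sequence replays unchanged on the modified graph; no vertex of an induced copy of $C$ can be removed, by minimum degree three for the first operation and because $\{r,s\}$ would be a two-vertex cut (as $|V(C)|\ge5$) for the second; and the surviving copy would then have to be the edited $C$, which has the wrong edge count. (A minor remark: exhausting low-degree deletions first is not actually essential to your argument, since the second operation is already stated in the form $N(a)=\{b,r,s\}$, $N(b)=\{a,r,s\}$.)
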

\begin{proof}
Let $H'$ be one of the two modified graphs. All changed pairs lie within the terminal $C$, so the degrees and prescribed neighborhoods of vertices at the moment they are removed in the original reduction sequence are unchanged in $H'$.

Any induced copy of $C$ has minimum degree at least three, and cannot contain a vertex removed by the first operation. At an operation of the second type, a copy containing just one of $a,b$ would give that vertex degree at most two. If it contained both, it would also have to contain $r,s$. Since a non-complete $3$-connected graph has at least five vertices, the copy would contain another vertex. Removing $r,s$ would separate $\{a,b\}$ from this other vertex, contrary to $3$-connectedness.

Thus any induced $C$ in $H'$ would survive the whole reduction and lie inside $C+e$ or $C-e$. These graphs have the same order as $C$ and a different number of edges, which is impossible.
\end{proof}

\begin{proposition}\label{prop:core-hosts}
If $H$ admits such a reduction to a non-complete $3$-connected graph, then both $\Dsat(H)$ and $\Asat(H)$ hold.
\end{proposition}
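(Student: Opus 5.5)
The plan is to apply the two criteria of Section~\ref{sec:criteria} with the terminal core $C$ in the role of the protected subgraph. Let $C$ be the non-complete $3$-connected induced graph reached by the reduction. First, $C$ satisfies the hypotheses on $C$ in both Theorem~\ref{thm:deletion-criterion} and Theorem~\ref{thm:addition-criterion}. It is connected. It has no cut vertex. It has no two-vertex cut of either type, adjacent or nonadjacent, since $3$-connectedness forbids every vertex cut of size at most two.

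For $\Dsat(H)$, I need a nonedge $e_0$ of $H$ such that $H+e_0$ is $C$-free. Since $C$ is non-complete, it has a nonedge $e_0$, and this is also a nonedge of $H$ because $C$ is induced. Lemma~\ref{lem:core-destruction} says exactly that adding $e_0$ makes the graph on $V(H)$ $C$-free. Theorem~\ref{thm:deletion-criterion} then gives $\Dsat(H)$.

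For $\Asat(H)$, I need an edge $e_0$ of $H$ with $H-e_0$ $C$-free. Since $C$ is $3$-connected it has at least four vertices and minimum degree at least three, so it has an edge $e_0$, which is an edge of $H$. Lemma~\ref{lem:core-destruction} again shows that $H-e_0$ is $C$-free, and Theorem~\ref{thm:addition-criterion} gives $\Asat(H)$.

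There is essentially no obstacle, since the substantive work is already contained in Lemma~\ref{lem:core-destruction} and the criteria. The only points to check are that the criteria require $C$ to be an induced subgraph of $H$, which holds because the reduction deletes vertices, and that the chosen pair lies inside $V(C)$, which is what Lemma~\ref{lem:core-destruction} needs.
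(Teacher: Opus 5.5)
Your proposal is correct and matches the paper's proof: the paper also notes that the $3$-connected core $C$ has no cut vertex and no two-vertex cut of either type, and then combines Lemma~\ref{lem:core-destruction} with Theorems~\ref{thm:deletion-criterion} and~\ref{thm:addition-criterion}. Your explicit choice of a nonedge and of an edge inside $C$, both pairs of $H$ because $C$ is induced, only spells out what the paper leaves implicit.
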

\begin{proof}
The graph $C$ has neither a cut vertex nor any two-vertex cut. Apply Lemma~\ref{lem:core-destruction} and the two finite protection criteria.
\end{proof}

We use the following two structural facts from~\cite[Lemmas~12--13]{BGJMS}:
\begin{enumerate}
\item every graph on at least $12$ vertices, or its complement, has a $3$-connected subgraph on at least five vertices;
\item if a graph or its complement contains a non-complete $3$-connected induced subgraph on at least five vertices, then the $3^{*}$-core of one of the two graphs is non-complete and $3$-connected.
\end{enumerate}
The second statement and complementation immediately yield the following.

\begin{corollary}\label{cor:outside-R}
If $H$ or $\comp H$ contains a non-complete $3$-connected induced subgraph on at least five vertices, then $\Dsat(H)$ holds.
\end{corollary}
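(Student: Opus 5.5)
The plan is to combine the second structural fact from~\cite{BGJMS} with Proposition~\ref{prop:core-hosts} and the duality~\eqref{eq:duality}. Assume $H$ or $\comp H$ contains a non-complete $3$-connected induced subgraph on at least five vertices. The hypothesis is symmetric under complementation, so the second fact applies to the pair $\{H,\comp H\}$. It yields a graph $X\in\{H,\comp H\}$ whose $3^{*}$-core is non-complete and $3$-connected.

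By the discussion preceding Lemma~\ref{lem:core-destruction}, the $3^{*}$-core is the terminal graph of a $3^{*}$-reduction in our sense: exhaust low-degree deletions first, then the adjacent degree-three true-twin deletions. Hence $X$ admits a reduction to a non-complete $3$-connected graph, and Proposition~\ref{prop:core-hosts} gives both $\Dsat(X)$ and $\Asat(X)$.

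We then split into two cases. If $X=H$, we use $\Dsat(H)$ directly. If $X=\comp H$, we use $\Asat(\comp H)$; by~\eqref{eq:duality} applied to $\comp H$, this is equivalent to $\Dsat(\comp{\comp H})=\Dsat(H)$. Proposition~\ref{prop:core-hosts} is what makes this work: it supplies \emph{both} properties, so whichever of the two graphs has the good core, we obtain the deletion property for $H$.

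The only point needing care is the hidden one, namely whether the reduction order in~\cite{BGJMS} matches the restricted two-operation order used here. I expect this to be the main (if minor) obstacle. It is handled by the remark that, once degrees are at least three, an eligible pair must be an adjacent degree-three true-twin pair. There is also a subtlety that interleaving the operations could give a different terminal graph. Lemma~\ref{lem:core-destruction} only needs the existence of some reduction sequence ending at the core, and exhausting operation (1) before operation (2) is one legitimate order. So I would simply cite the terminology identification and note that the core is, by definition, reached by such a sequence.
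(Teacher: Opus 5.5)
Your proposal is correct and matches the paper's proof. The second structural fact from BGJMS selects an orientation whose $3^{*}$-core is non-complete and $3$-connected, and Proposition~\ref{prop:core-hosts} gives both hosts for that orientation. When that orientation is $\comp H$, duality converts $\Asat(\comp H)$ into $\Dsat(H)$. Your care about the reduction order is the same identification the paper makes just before Lemma~\ref{lem:core-destruction}.
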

\begin{proof}
In one orientation, Proposition~\ref{prop:core-hosts} gives both kinds of host. If this orientation is $\comp H$, use its addition host and~\eqref{eq:duality}.
\end{proof}

Let $\Rclass$ be the class of graphs $H$ such that neither $H$ nor $\comp H$ contains a non-complete $3$-connected induced subgraph on at least five vertices. This class is hereditary and closed under complementation. Only targets in $\Rclass$ remain.

\subsection{The structure from order twelve onwards}

We first state the finite fact used in the structural argument. Its complete certificate is specified in Appendix~\ref{app:repro}.

\begin{lemma}[Certified bipartite lemma]\label{lem:bipartite}
Let $(a,b)$ be one of $(5,7),(6,6),(7,5),(8,4),(9,3)$. Every bipartite graph with parts $A,B$, where $|A|=a$, $|B|=b$, and every vertex in $B$ has exactly $a-2$ neighbors in $A$, contains a $3$-connected subgraph on at least five vertices.
\end{lemma}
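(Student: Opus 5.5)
The plan is to encode the bipartite graph by its \emph{missing multigraph}. Let $M$ be the multigraph on vertex set $A$ with one edge $\{x,y\}$ for each $v\in B$, where $x,y$ are the two vertices of $A$ not adjacent to $v$. Then $M$ has exactly $b$ edges on $a$ vertices. A vertex $x\in A$ has degree $b-d_M(x)$ in the bipartite graph $G$, and $G$ is determined up to isomorphism by $M$ up to isomorphism. The statement therefore becomes a finite assertion about multigraphs with $b$ edges on $a$ vertices, for the five listed pairs. Since only a $3$-connected \emph{subgraph} is required, we may delete edges and vertices freely. Both $K_{3,3}$ and the whole graph restricted to suitable vertex sets are natural candidates.

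The first step is a $K_{3,3}$ test. Three vertices of $B$ have at least three common neighbours in $A$ exactly when the corresponding three edges of $M$ span at most $a-3$ vertices of $A$. For $(9,3)$ this always holds, since three edges span at most $6=a-3$ vertices. For $(8,4)$ it fails only when some three edges of $M$ span six vertices. The second step handles the remaining configurations, where $M$ is close to a matching or spread out, by taking $G$ itself or $G$ minus the vertices of $A$ with large $d_M$. We then verify $3$-connectivity directly: every vertex has degree at least $3$, and no set of at most two vertices separates. For the latter we would argue via common neighbourhoods. Two vertices of $B$ share at least $a-4$ neighbours in $A$, which is at least $3$ once $a\ge7$, so $B$ stays linked after deleting two vertices. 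Each vertex of $A$ with $d_M(x)\le b-3$ keeps a neighbour in $B$ after two deletions.

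The hardest case is the small-$a$ end, $(5,7)$ and $(6,6)$. There every vertex of $B$ has only $3$ or $4$ neighbours, so both the $K_{3,3}$ test and the common-neighbour argument break down. For example, for $(5,7)$ the seven neighbourhoods are triples in a $5$-set and may all be distinct. Here I would split on the degree sequence of $M$. Vertices of $A$ with $d_M(x)\ge b-2$ have $G$-degree at most $2$ and must be discarded. The remaining induced bipartite graph then needs a Menger-type check. Because the number of isomorphism types of $M$ is small (multigraphs with at most $7$ edges on at most $9$ vertices), the reliable route, consistent with the paper's certified approach, is exhaustive enumeration. We would enumerate the multigraphs $M$ up to isomorphism for each pair. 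For each one we would exhibit an explicit vertex subset $S$ with $|S|\ge5$ such that $G[S]$ is $3$-connected, found by testing $G$, $G$ minus low-degree $A$-vertices, and $K_{3,3}$ or $K_{3,4}$ candidates. Each such witness is then checked by deleting all subsets of size at most two. That enumeration is exactly the certificate referenced in Appendix~\ref{app:repro}.
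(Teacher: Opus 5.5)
Your proposal is correct and is essentially the paper's argument: encode each vertex of $B$ by its missing pair, enumerate all inputs exhaustively, and give for each input a witness set of size at least five whose induced graph stays connected after deleting every set of at most two vertices. The one difference is that the paper enumerates all labeled multisets of $b$ pairs from $\binom A2$ in lexicographic order (143,231 inputs, Table~\ref{tab:bipartite}) rather than isomorphism classes of $M$, so its checker does not have to trust an isomorphism-class generator; your partial hand analysis, for instance the automatic $K_{3,3}$ when $(a,b)=(9,3)$, is a sound shortcut but is not used in the paper's certificate.
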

\begin{proof}[Certificate verification]
Keep $A$ labeled. Each vertex in $B$ is specified by its two missing neighbors, so, up to permutations of $B$, an input is a multiset of $b$ pairs from $\binom A2$. All such multisets are enumerated in lexicographic order. Their number is
\[
 \binom{\binom a2+b-1}{b}.
\]
For each input the certificate gives a vertex set of size at least five. The checker reconstructs the input, verifies the stated set, and tests connectedness of its induced graph after deleting every set of zero, one, or two vertices. The totals are given in Table~\ref{tab:bipartite}. All inputs have a verified witness.
\end{proof}

\begin{table}[htbp]
\centering
\caption{Complete enumeration for Lemma~\ref{lem:bipartite}.}\label{tab:bipartite}
\begin{tabular}{rrr}\toprule
$a$ & $b$ & Inputs with verified witnesses\\\midrule
5&7&11,440\\
6&6&38,760\\
7&5&53,130\\
8&4&31,465\\
9&3&8,436\\\midrule
\multicolumn{2}{l}{Total}&143,231\\\bottomrule
\end{tabular}
\end{table}

\begin{proposition}\label{prop:dense-reduction}
If $H\in\Rclass$ and $|V(H)|\ge12$, then one of $H,\comp H$ consists of a clique $C$ and at most two other vertices, each with at most two neighbors in $C$. Moreover, $|C|\ge10$.
\end{proposition}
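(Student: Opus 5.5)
Since $|V(H)|\ge 12$, the first structural fact from~\cite[Lemmas~12--13]{BGJMS} lets us replace $H$ by $\comp H$ if necessary and assume that $H$ contains a $3$-connected (not necessarily induced) subgraph $K$ on at least five vertices. The class $\Rclass$ is closed under complementation, so this normalisation is harmless. Because $H\in\Rclass$, the induced subgraph $H[V(K)]$ is $3$-connected (it contains $K$ spanning), has at least five vertices, and cannot be non-complete. Hence $V(K)$ is a clique.

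Let $C$ be a maximum clique of $H$ among those containing a $3$-connected spanning subgraph on at least five vertices; equivalently, a maximum clique with $|C|\ge5$. The first step is a saturation argument. Any vertex $v\notin C$ with at least three neighbours in $C$ gives $H[C\cup\{v\}]$ as $3$-connected: $C$ is a clique on at least five vertices, and $v$ has three neighbours in it. By $\Rclass$ this graph is complete, contradicting maximality. So every outside vertex has at most two neighbours in $C$.

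The second step bounds the number $k$ of outside vertices. I would argue in $\comp H$. There each outside vertex is adjacent to at least $|C|-2$ vertices of the independent set $C$. Take $A\subseteq C$ and a set $B$ of outside vertices with $|A|+|B|=12$ and $(|A|,|B|)$ one of the pairs in Lemma~\ref{lem:bipartite}. Trim each vertex of $B$ to exactly $|A|-2$ neighbours in $A$, which is possible since it has at least $|A|-2$. Lemma~\ref{lem:bipartite} then gives a $3$-connected subgraph of the bipartite graph, hence of $\comp H[A\cup B]$, on at least five vertices. As before, $\Rclass$ forces its vertex set to be a clique in $\comp H$. But it contains two vertices of $A$ (a $3$-connected bipartite subgraph has vertices on both sides, at least two on each), and these are nonadjacent in $\comp H$, a contradiction.

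The lemma's pairs $(5,7),\dots,(9,3)$ exist precisely when $|C|\ge5$ and $k\ge3$ with $|C|+k\ge12$. For $k$ or $|C|$ large we may shrink $A$ or $B$ to hit a pair. Hence $k\le2$, and then $|C|=|V(H)|-k\ge10$.

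The main obstacle is making sure the pair-selection covers all cases. If $|C|\ge9$ and $k\ge3$, take $(9,3)$. If $5\le|C|\le8$, then $k\ge12-|C|$ and we take $(|C|,12-|C|)$, which lies in the list. The extra case $|C|<5$ cannot occur, since $C$ was chosen with at least five vertices. A subtle point to check is that the $3$-connected subgraph produced in $\comp H$ is not complete, because it meets $A$ in at least two vertices. I expect this verification, together with the trimming of neighbourhoods to exact degree $a-2$ (removing edges keeps the subgraph a subgraph of $\comp H$), to be the delicate part.
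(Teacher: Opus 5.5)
Your proposal is correct and follows essentially the same route as the paper. You normalise the orientation via the first structural fact, take a maximal clique $C$ with $|C|\ge5$, saturate so that each outside vertex has at most two neighbours in $C$, and derive a contradiction in the complement from Lemma~\ref{lem:bipartite} with the same choice of pairs $(9,3)$ or $(|C|,12-|C|)$. Choosing a maximum clique rather than a maximum-order $3$-connected induced subgraph, as the paper does, is only a cosmetic difference in this class.
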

\begin{proof}
By the first structural fact above, one orientation $D$ contains a $3$-connected subgraph on at least five vertices. Adding all edges induced on its vertex set preserves $3$-connectedness. Choose a $3$-connected induced subgraph of $D$ of maximum order. Since $D\in\Rclass$, it is a clique $C$, with $|C|\ge5$.

Every vertex outside $C$ has at most two neighbors in $C$. Otherwise, that vertex together with $C$ would induce a larger $3$-connected graph. Suppose there are at least three vertices outside $C$. If $|C|\ge9$, take $A\subseteq C$ of size $a=9$ and $B\subseteq V(D)\setminus C$ of size $b=3$. If $5\le|C|\le8$, take $A=C$, $a=|C|$, and choose $b=12-a$ vertices outside $C$, possible since $|V(D)|\ge12$.

In $\comp D$, every vertex in $B$ has at least $a-2$ neighbors in $A$. Retain exactly $a-2$ such edges at each vertex of $B$ and discard all within-part edges. Lemma~\ref{lem:bipartite} gives a $3$-connected subgraph on a vertex set $S$ of size at least five. The induced graph $\comp D[S]$ is still $3$-connected. A $3$-connected bipartite graph has at least three vertices in each part; in particular $S\cap A$ contains at least three vertices. These are independent in $\comp D$, so $\comp D[S]$ is not complete. This contradicts $D\in\Rclass$.

Consequently at most two vertices lie outside $C$, and $|C|\ge|V(H)|-2\ge10$.
\end{proof}

This is the structural argument underlying~\cite[proof of Theorem~10]{BGJMS}, with the finite bipartite step supplied here by a complete verifiable certificate.

\section{Halved cubes and dense templates}\label{sec:dense}

\subsection{Two uniform constructions}

The \emph{halved cube} $\Delta_n$ has as its vertices the even-cardinality subsets of $[n]$. Two vertices $X,Y$ are adjacent if and only if $|X\Sym Y|=2$. Thus $|V(\Delta_n)|=2^{n-1}$. Symmetric difference with a fixed even set, and permutation of the coordinates, are automorphisms. Together these act transitively on edges.

Let $T_M$ be the graph formed from a clique of order $M$ by adding one vertex with exactly two neighbors in the clique.

\begin{proposition}\label{prop:TM}
For every $M\ge4$, the graph $\Delta_M$ is a deletion host for $T_M$.
\end{proposition}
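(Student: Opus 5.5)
The proof has two parts. First I show $\Delta_M$ is $T_M$-free by classifying its $M$-cliques and counting how many neighbors an outside vertex has in such a clique. Then I exhibit one witness for a single edge and transport it to every edge by edge-transitivity.

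\emph{Step 1: cliques of order $M$.} Let $K$ be a clique of $\Delta_M$ with $|K|=M$. Translating by symmetric difference with a member of $K$, I may assume $\varnothing\in K$. The other $M-1$ members are then $2$-subsets of $[M]$ that pairwise meet in exactly one element, since $|X\Sym Y|=2$. A family of pairwise intersecting $2$-sets is either a star, all containing a common $i$, or contained in a triangle $\{a,b\},\{a,c\},\{b,c\}$. Since a star has at most $M-1$ members, there are two cases:
\begin{itemize}
\item if $M\ge5$, then $K=\{\varnothing\}\cup\{\{i,j\}:j\ne i\}$ for some $i$;
\item if $M=4$, either $K$ has that form or $K=\{\varnothing,\{a,b\},\{a,c\},\{b,c\}\}$.
\end{itemize}

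\emph{Step 2: neighbors of an outside vertex.} Let $Z\notin K$ be even, and put $D=Z$, which is now $Z\Sym\varnothing$. In the star case:
\begin{itemize}
\item if $|D|=2$ and $i\notin D$, the neighbors of $Z$ in $K$ are $\varnothing$ and $\{i,j\}$ for $j\in D$, so three;
\item if $|D|=4$ and $i\in D$, the neighbors are $\{i,j\}$ for $j\in D\setminus\{i\}$, so three;
\item in all other cases $Z$ has no neighbor in $K$.
\end{itemize}
In the triangle case with $M=4$, the outside vertices are $\{a,d\},\{b,d\},\{c,d\},\{a,b,c,d\}$, where $d$ is the fourth coordinate. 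A direct check shows each has exactly three neighbors in $K$. Hence no vertex outside an $M$-clique has exactly two neighbors in it, so $\Delta_M$ is $T_M$-free.

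\emph{Step 3: deletion.} The automorphisms act transitively on edges, so it suffices to treat $e=\{\varnothing,\{1,2\}\}$. Take the clique $K=\{\varnothing\}\cup\{\{3,j\}:j\ne3\}$, which has order $M$ and avoids the vertex $\{1,2\}$. In $\Delta_M$ the vertex $\{1,2\}$ has exactly three neighbors in $K$, namely $\varnothing$, $\{1,3\}$ and $\{2,3\}$. After deleting $e$ only two neighbors remain, and $K$ is still a clique because $e\not\subseteq K$. Thus $(\Delta_M-e)[K\cup\{\{1,2\}\}]\cong T_M$. Finally, $\Delta_M$ has edges, which completes the argument.

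The main obstacle is Step 1, specifically making sure the $M=4$ triangle-type cliques are not overlooked. That case is finite and I would simply check it by hand as in Step 2.
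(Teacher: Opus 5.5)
Your proof is correct and follows the paper's argument: show that every $M$-clique is a full star (translated to contain $\varnothing$), that an outside vertex has $0$ or $3$ neighbors in it, and then use the same witness (the star at coordinate $3$ plus the vertex $\{1,2\}$ after deleting $\{\varnothing,\{1,2\}\}$) together with edge-transitivity. The only cosmetic difference is at $M=4$, where you check the triangle-type cliques directly while the paper uses $\Delta_4\cong K_{2,2,2,2}$. Both observations amount to the same count of three neighbors.
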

\begin{proof}
First let $M\ge5$. We classify the cliques of order at least five in $\Delta_M$. Translate a vertex of such a clique to $\varnothing$. Its other vertices are distinct pairwise-intersecting two-element sets. At least four such sets have a common element: if $\{p,q\}$ and $\{p,r\}$ occur, any member not containing $p$ must be $\{q,r\}$, and no fourth distinct pair can intersect all three. Translating back, every clique of order at least five lies in a star
\[
 \mathcal S_c=\{c\Sym\{i\}:i\in[M]\},
\]
where $c$ has odd cardinality. An $M$-clique is therefore a full star. For an even set $z$ outside this star, $|z\Sym c|$ is either $3$ or at least $5$. In the former case $z$ has exactly three neighbors in the star, and in the latter it has none. Hence $\Delta_M$ is $T_M$-free.

Delete the edge $e=\{\varnothing,\{1,2\}\}$. The star with odd center $\{3\}$ remains an $M$-clique. Before deletion, the vertex $\{1,2\}$ has precisely the three neighbors $\varnothing,\{1,3\},\{2,3\}$ in that star; afterwards it has exactly two. These vertices induce $T_M$ in $\Delta_M-e$. Edge transitivity treats every edge.

For $M=4$, we have $\Delta_4\cong K_{2,2,2,2}$. Every $4$-clique uses one vertex of each part, and any vertex outside it has exactly three neighbors in it. Thus $\Delta_4$ is $T_4$-free. The same marked-edge witness just given supplies a copy after deletion, again for every edge by transitivity. The explicit certificate also checks this boundary case directly.
\end{proof}

For $m\ge3$, set
\[
 J_m=K_2\vee(K_m\sqcup K_2).
\]

\begin{proposition}\label{prop:Jm}
For every $m\ge3$, the graph $\Delta_{m+3}$ is a deletion host for $J_m$.
\end{proposition}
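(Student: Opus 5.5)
The plan mirrors the proof of Proposition~\ref{prop:TM}: reuse its classification of large cliques in halved cubes, and use edge transitivity of $\Delta_n$ to reduce the deletion property to one explicit witness. Put $n=m+3\ge6$. Label $J_m$ as follows: $u,v$ are the two join vertices, $K$ is the $m$-clique, and $w,x$ is the extra $K_2$. Then $Q=\{u,v\}\cup K$ is a clique of order $m+2\ge5$, and $w,x$ are adjacent to each other and, inside $Q$, adjacent exactly to $u,v$.

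\emph{$H$-freeness.} Suppose $\Delta_n$ contained an induced $J_m$. The classification in the proof of Proposition~\ref{prop:TM} applies for every order $\ge5$ and every dimension, so the clique $Q$ lies in a star $\mathcal S_c$ with $|c|$ odd. Since $|\mathcal S_c|=n=m+3$, we can write $Q=\{c\Sym\{i\}:i\in I\}$, where $I=[n]\setminus\{j\}$ for a single index $j$.

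A vertex $z\notin\mathcal S_c$ has either no neighbours in $\mathcal S_c$ or exactly the three neighbours $c\Sym\{i\}$ with $i\in z\Sym c$, in the case $|z\Sym c|=3$. Write $u=c\Sym\{a\}$ and $v=c\Sym\{b\}$. For $w$ to have exactly the two neighbours $u,v$ in $Q$, we need $w\Sym c=\{a,b,j\}$. The same argument forces $x\Sym c=\{a,b,j\}$. Hence $w=x$, a contradiction, and $\Delta_n$ is $J_m$-free.

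\emph{Witness after deletion.} We embed $J_m+e$ inducedly in $\Delta_n$, where $e$ joins $w$ to one vertex of $K$. Fix an odd set $c$ and distinct indices $a,b,j,k$, and put
\[
 Q=\{c\Sym\{i\}:i\ne j\},\quad u=c\Sym\{a\},\quad v=c\Sym\{b\},\quad x=c\Sym\{a,b,j\},\quad w=c\Sym\{a,b,k\}.
\]
Then the following hold:
\begin{itemize}
\item $|x\Sym w|=|\{j,k\}|=2$, so $w$ and $x$ are adjacent.
\item In $Q$, the vertex $x$ is adjacent exactly to $u,v$, because its third star-neighbour is $c\Sym\{j\}\notin Q$.
\item In $Q$, the vertex $w$ is adjacent exactly to $u,v$ and $c\Sym\{k\}$.
\end{itemize}
Deleting the edge $\{w,c\Sym\{k\}\}$ therefore leaves an induced $J_m$. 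Since the automorphisms of $\Delta_n$ act transitively on edges, every edge deletion creates an induced $J_m$. Finally, $E(\Delta_n)\ne\varnothing$.

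The main point requiring care is the freeness step. There one must check that the clique classification of Proposition~\ref{prop:TM} holds for cliques of order $m+2$ in dimension $m+3$, not only in dimension $M$. One must also check that the "no neighbours or exactly three" dichotomy for vertices outside a star forces the vertices $w$ and $x$ to coincide. Both checks are short. The explicit witness is then a direct verification.
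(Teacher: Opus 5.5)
Your proof is correct. The witness half coincides with the paper's: taking $c=\{3\}$, $a=1$, $b=2$, $j=4$, $k=3$ gives $u=\{1,3\}$, $v=\{2,3\}$, $K=\{\varnothing\}\cup\{\{3,i\}:5\le i\le n\}$, $x=\{1,2,3,4\}$ and $w=\{1,2\}$, with deleted edge $\{\varnothing,\{1,2\}\}$. These are exactly the paper's sets $S,A,B$.

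The freeness half takes a different route.

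\emph{The paper's route.} It uses a local fact: the common neighbourhood of any edge of $\Delta_n$ induces $K_2\mathbin{\square}K_{n-2}$. An induced $J_m$ puts $K_m\sqcup K_2$ inside the common neighbourhood of the edge formed by the two universal vertices. Since $m\ge3$, the $K_m$ occupies one row. A $K_2$ anticomplete to it must lie in the other row, in two columns the $K_m$ avoids. That needs $m+2$ columns, but only $m+1$ exist.

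\emph{Your route.} You reuse the clique classification from Proposition~\ref{prop:TM}, whose argument is indeed independent of the dimension. It shows the $(m+2)$-clique $\{u,v\}\cup K$ is a star minus one vertex. The $0$-or-$3$ dichotomy then forces any vertex seeing exactly $u,v$ in it to equal $c\Sym\{a,b,j\}$, so such a vertex is unique.

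Your version rests both halved-cube constructions on a single structural fact and pinpoints the obstruction as uniqueness of that vertex. The paper's version is self-contained and makes the column count behind $n=m+3$ immediate.

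One step you left implicit: before applying the dichotomy, you must rule out $w,x\in\mathcal S_c$. This is immediate, because the only star vertex outside $Q$, namely $c\Sym\{j\}$, is adjacent to all $m+2\ge5$ vertices of $Q$, whereas $w$ and $x$ each see only two of them.
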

\begin{proof}
The common neighborhood of any edge of $\Delta_n$ induces $K_2\mathbin{\square}K_{n-2}$. For the edge between $\varnothing$ and $\{1,2\}$, its common neighbors are
\[
 \{1,i\},\ \{2,i\}\qquad(3\le i\le n).
\]
These form two clique rows, with a matching between corresponding columns.

An induced $J_m$ would require an induced $K_m\sqcup K_2$ in the common neighborhood of the edge representing its two universal vertices. Since $m\ge3$, the $K_m$ must lie in one row. A $K_2$ anticomplete to it cannot be a matching edge; it must lie in the other row and use two columns avoided by the $K_m$. This requires $m+2$ columns, whereas $n=m+3$ gives only $m+1$ columns. Thus $\Delta_{m+3}$ is $J_m$-free.

Put $n=m+3$ and delete $e=\{\varnothing,\{1,2\}\}$. Consider
\begin{align*}
 S&=\{\{1,3\},\{2,3\}\},\\
 A&=\{\varnothing\}\cup\{\{3,k\}:5\le k\le n\},\\
 B&=\{\{1,2\},\{1,2,3,4\}\}.
\end{align*}
The set $S$ is a universal $K_2$ on these vertices, $A$ induces $K_m$, and $B$ induces $K_2$. Before deletion the only edge between $A$ and $B$ is $e$. This follows directly by taking symmetric differences. Hence these vertices induce $J_m$ after deletion, and edge transitivity completes the proof.
\end{proof}

\subsection{Exhausting the dense templates}

\begin{lemma}[Dense templates]\label{lem:dense}
Let $D$ consist of a clique $C$ of order $M\ge5$ and at most two other vertices, each with at most two neighbors in $C$. Then $\Dsat(\comp D)$ holds. If $D$ is non-complete, $\Dsat(D)$ also holds.
\end{lemma}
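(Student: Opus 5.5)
The two assertions are proved separately. The first is a clean application of the addition criterion. The second needs a case analysis over the possible attachments of the at most two extra vertices.

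\emph{The complement.} By~\eqref{eq:duality}, $\Dsat(\comp D)$ is equivalent to $\Asat(D)$, and I would obtain this from Theorem~\ref{thm:addition-criterion} with the induced subgraph taken to be the clique $C\cong K_M$. A complete graph has no cut vertex and no two-vertex cut of either type. It therefore suffices to find an edge $e_0$ of $D$ with $D-e_0$ free of $K_M$; take $e_0$ inside $C$. A clique of $D$ containing an outer vertex $x$ has at most $1+2+1=4<M$ vertices: $x$ itself, at most two neighbors of $x$ in $C$, and possibly the other outer vertex. Hence $C$ is the unique $M$-clique of $D$, and it is destroyed by deleting $e_0$. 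This proves $\Asat(D)$ and hence $\Dsat(\comp D)$.

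\emph{The graph $D$ itself.} The analogous use of Theorem~\ref{thm:deletion-criterion} fails, because adding an edge never destroys an induced clique. Instead I would use line graphs. Write $C=L(K_{1,M})$, with center $o$ and leaves $\ell_1,\dots,\ell_M$. An outer vertex with two neighbors $i,j$ in $C$ is modelled by an edge $\ell_i\ell_j$. An outer vertex with one neighbor $i$ is modelled by a pendant edge at $\ell_i$. An outer vertex with no neighbors is modelled by a disjoint edge. Adjacency between the two outer vertices is encoded by letting their edges share an endpoint where this is consistent.

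Since $D$ is non-complete, it has one or two outer vertices. I would enumerate the isomorphism types, organized by:
\begin{itemize}
\item the neighborhood sizes $0$, $1$, $2$ of the outer vertices;
\item whether the two outer vertices are adjacent;
\item how their neighborhoods in $C$ overlap.
\end{itemize}
For each type I would either exhibit $D=L(F)$ or show that $D$ is not a line graph. When $D=L(F)$ and $F$ has a severed pair, item~(5) of Section~\ref{sec:prelim} applies. Since $M\ge5$, the star component is never $K_{1,3}$. So an untouched star leaf $\ell_k$ (neighbor $o$) together with any leaf of $F$ whose neighbor is not $o$ gives a severed pair. Such leaves exist, for example:
\begin{itemize}
\item the end of a pendant edge at some $\ell_i$, which gives $K_M$ plus a pendant vertex;
\item a vertex of a disjoint $K_2$, which gives $K_M\sqcup K_1$;
\item the end of a path of length two hanging from $\ell_i$;
\item a disjoint edge alongside other attachments.
\end{itemize}
These cover every configuration in which some outer vertex has at most one neighbor in $C$, up to the shared-endpoint bookkeeping.

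The remaining configurations are those where $F$ has no leaf away from $o$. Typically every outer vertex has exactly two neighbors in $C$. With one outer vertex this is $T_M$, handled by Proposition~\ref{prop:TM}. With two adjacent outer vertices having the same two neighbors, $D=K_2\vee(K_{M-2}\sqcup K_2)=J_{M-2}$, handled by Proposition~\ref{prop:Jm} since $M-2\ge3$. The other two-neighbor configurations are:
\begin{itemize}
\item disjoint or overlapping neighborhoods, with the outer vertices adjacent or not;
\item equal neighborhoods with nonadjacent outer vertices.
\end{itemize}
For these I would check whether $F$ still has a severed pair. Pendant structure at the leaves $\ell_i$, for instance a path $\ell_i\ell_j\ell_k$, can leave untouched star leaves together with leaves elsewhere. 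Where $D$ is not a line graph, I would look for a nonedge $f$ (typically the edge between the two outer vertices) with $D+f$ a line graph, and apply item~(6). Item~(2) with $C$ as a governing block is a fallback for any leftover type.

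The main obstacle is this last group: two outer vertices each with exactly two neighbors in $C$, in the configurations other than $J_{M-2}$. There the line-graph models have few or no leaves away from $o$, and some of these graphs are not line graphs. I would first try to show that each such graph is either one of the line-graph cases with a severed pair, or becomes a line graph after adding the edge between the outer vertices, so that item~(6) applies. Any survivors would then have to be matched against $T_M$, $J_m$ or the governing-block criterion.
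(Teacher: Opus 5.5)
Your proof of $\Dsat(\comp D)$ is correct and is the paper's argument. The second part has a genuine gap. The tools you commit to (severed pairs, item~(6), governing blocks, $T_M$, $J_{M-2}$) provably cannot finish several configurations. Your claim that every configuration with an outer vertex of at most one clique neighbour is a line graph with a severed pair is also false.

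Let $x,y$ be the outer vertices and record $(s_x,s_y;t)$ with $s_x=|N_C(x)|\le s_y=|N_C(y)|$ and $t=|N_C(x)\cap N_C(y)|$.
\begin{itemize}
\item For $x\sim y$ and $(1,1;0)$, $D=L(F)$, where $F$ is the star plus a vertex $w$ joined to $\ell_a$ and $\ell_b$. The shared endpoint $w$ is not a leaf, and every leaf of $F$ hangs at $o$. The root is unique by Whitney's theorem, so there is no severed pair. Item~(6) is unavailable because $D$ is a line graph.
\item The same obstruction occurs for $x\sim y$ with $(2,2;1)$ and for $x\not\sim y$ with $(2,2;0)$. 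In the first of these the root is the star plus $\ell_a\ell_b,\ell_a\ell_c$, and your path $\ell_b\ell_a\ell_c$ creates no new leaves.
\item For $x\not\sim y$ with $(2,2;2)$, and for $x\sim y$ with $(2,2;0)$, $D$ is not a line graph, but no added edge makes it one. Adding $xy$ in the first case gives $J_{M-2}$. Every other nonedge gives an outer vertex three neighbours in $C$, which is impossible because $C$ must be a star in any root.
\end{itemize}
All these graphs are $2$-connected, so the governing-block fallback is vacuous: $D$ is its only block.

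The missing ideas are further uses of the protection criteria, which you set aside after trying only the clique protector.
\begin{itemize}
\item If $x\not\sim y$, pass to $\comp D$. There $C$ is independent, so every triangle of $\comp D$ contains the edge $xy$. A triangle exists because $M\ge5$ leaves a clique vertex outside $N_C(x)\cup N_C(y)$. Theorem~\ref{thm:addition-criterion} for $\comp D$ with protector $K_3$ and $e_0=xy$ gives $\Asat(\comp D)$, i.e.\ $\Dsat(D)$ by~\eqref{eq:duality}. This settles all nonadjacent configurations at once.
\item If $x\sim y$, the cases $(2,2;0)$ and $(2,2;1)$ are $3$-connected and non-complete, so Corollary~\ref{cor:whole-graph} applies.
\item For $x\sim y$ with $(1,1;0)$ or $(1,2;0)$, write $N_C(x)=\{a\}$ and pick $b\in N_C(y)$. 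Then $x,y,b,a$ induce a $C_4$, and every induced $C_4$ uses $x$, $y$ and two clique vertices. Adding $ay$ destroys them all, so Theorem~\ref{thm:deletion-criterion} applies with protector $C_4$.
\end{itemize}
Several other cases you filed under severed pairs are not line graphs at all: adjacent $(0,2;0)$ and $(1,2;0)$, and nonadjacent $(1,1;1)$ and $(1,2;1)$. For these, item~(6) with the nonedge $xb$ (adjacent cases) or $xy$ (nonadjacent cases) would suffice. For adjacent $(0,2;0)$ the paper instead uses $T_M$ with a pendant vertex and item~(3).
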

\begin{proof}
Every outside vertex has total degree at most three, so $C$ is the unique $M$-clique in $D$. Deleting an edge of $C$ destroys all copies of $K_M$. The addition criterion, with protector $K_M$, yields $\Asat(D)$, and therefore $\Dsat(\comp D)$.

It remains to consider a non-complete $D$. If there is one outside vertex, let $s$ be its number of neighbors in $C$. For $s=0$, the graph is the line graph of the disjoint union of an $M$-edge star and an independent edge. For $s=1$, it is the line graph of the tree obtained by extending one arm of an $M$-edge star by one edge. In each case the root has a severed pair, so~\cite[Theorem~5.2]{FHHS} applies. For $s=2$, use Proposition~\ref{prop:TM}.

Now suppose the outside vertices are $x,y$. If they are nonadjacent, every triangle of $\comp D$ contains the edge $xy$. Such a triangle exists, because $M\ge5$ and at most four clique vertices belong to $N_C(x)\cup N_C(y)$. Deleting $xy$ from $\comp D$ destroys every triangle. Apply the addition criterion to $\comp D$ with protector $K_3$, and then take complements.

Suppose $xy\in E(D)$, and put
\[
 s_x=|N_C(x)|\le s_y=|N_C(y)|,\qquad
 t=|N_C(x)\cap N_C(y)|.
\]
Table~\ref{tab:dense} lists all possible triples. We justify its entries below.

\begin{table}[htbp]
\centering
\caption{Adjacent outside vertices in Lemma~\ref{lem:dense}.}\label{tab:dense}
\begin{tabular}{@{}cl@{}}\toprule
$(s_x,s_y;t)$ & Reason for a deletion host\\\midrule
$(0,0;0)$ & Line graph with a severed pair\\
$(0,1;0)$ & Line graph with a severed pair\\
$(0,2;0)$ & $T_M$ with one leaf attached\\
$(1,1;1)$ & Line graph with a severed pair\\
$(1,1;0)$ & Deletion criterion with protector $C_4$\\
$(1,2;1)$ & Line graph with a severed pair\\
$(1,2;0)$ & Deletion criterion with protector $C_4$\\
$(2,2;0)$ or $(2,2;1)$ & Non-complete $3$-connected graph\\
$(2,2;2)$ & $J_{M-2}$, using Proposition~\ref{prop:Jm}\\\bottomrule
\end{tabular}
\end{table}

For the line-graph cases, represent the clique vertices by the edges $c\ell_i$ of an $M$-edge star. In case $(0,0;0)$, add a disjoint path with two edges. In case $(0,1;0)$, extend one star arm by two edges. In case $(1,1;1)$, attach two new leaves to a single star leaf. Finally, for $(1,2;1)$, write $N_C(x)=\{a\}$ and $N_C(y)=\{a,b\}$ and add root edges $\ell_a z$ and $\ell_a\ell_b$, corresponding to $x$ and $y$. These simple root graphs realize exactly the stated adjacencies. In the last case $z$ and any $\ell_j$ with $j\notin\{a,b\}$ form a severed pair. In the other cases choose a leaf on the new branch or component and a leaf on an unchanged star arm. The components involved are not claws. This verifies the hypotheses of~\cite[Theorem~5.2]{FHHS} in each case.

In case $(0,2;0)$, the graph is $T_M$ with a leaf attached at its degree-two vertex. The graph $T_M$ is $2$-connected. Proposition~\ref{prop:TM} and~\cite[Corollary~4.2]{FHHS} therefore apply.

In the two $C_4$ cases, write $N_C(x)=\{a\}$ with $a\notin N_C(y)$. For any $b\in N_C(y)$, the vertices $x,y,b,a$ induce a $4$-cycle in that order. Every induced $4$-cycle in $D$ must use both outside vertices and two clique vertices. Adding the nonedge $ay$ makes $N_C(x)\subseteq N_C(y)$ and destroys every such cycle. Since $C_4$ has neither a cut vertex nor an adjacent two-vertex cut, Theorem~\ref{thm:deletion-criterion} applies.

For $(2,2;0)$ and $(2,2;1)$, the graph $D$ is $3$-connected. After any two deletions the remaining clique is connected. If a surviving outside vertex loses both clique neighbors, both deletions were used on those neighbors; the other outside vertex survives and has a surviving clique neighbor because the two neighborhoods are different. The edge $xy$ connects the former vertex back to the clique. If one outside vertex is deleted, the remaining deletion cannot remove both clique neighbors of the other. Fewer deletions cause no difficulty. Since $D$ is non-complete, Corollary~\ref{cor:whole-graph} applies.

Finally, if $(s_x,s_y;t)=(2,2;2)$, the two common clique neighbors are universal and $D=J_{M-2}$. Here $M-2\ge3$, so Proposition~\ref{prop:Jm} applies. The table exhausts all allowed neighborhood sizes and intersections.
\end{proof}

\begin{corollary}\label{cor:large}
Every non-complete graph in $\Rclass$ on at least $12$ vertices has a finite deletion host.
\end{corollary}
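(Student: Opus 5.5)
The plan is to combine Proposition~\ref{prop:dense-reduction} with Lemma~\ref{lem:dense}. Let $H\in\Rclass$ be non-complete with $|V(H)|\ge12$. By Proposition~\ref{prop:dense-reduction}, some orientation $D\in\{H,\comp H\}$ consists of a clique $C$ with $|C|\ge10$, together with at most two further vertices, each having at most two neighbors in $C$. In particular $M=|C|\ge10\ge5$, so $D$ satisfies the hypotheses of Lemma~\ref{lem:dense}.

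We split into two cases according to which orientation $D$ is.

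\emph{Case $D=\comp H$.} Lemma~\ref{lem:dense} gives $\Dsat(\comp D)=\Dsat(H)$ directly. This conclusion requires no non-completeness assumption on $D$.

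\emph{Case $D=H$.} Here $H$ is non-complete by hypothesis. The second assertion of Lemma~\ref{lem:dense} therefore gives $\Dsat(H)$.

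The only point needing care is that Proposition~\ref{prop:dense-reduction} permits either orientation. Both are covered, because Lemma~\ref{lem:dense} was stated for $\comp D$ unconditionally and for $D$ under non-completeness. Since $|V(H)|\ge12$ forces $|C|\ge10$, the order condition $M\ge5$ holds automatically. I see no real obstacle: all the case analysis was absorbed into Lemma~\ref{lem:dense}, so the corollary is a two-line assembly.
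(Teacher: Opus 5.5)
Your proposal is correct and matches the paper's proof: Proposition~\ref{prop:dense-reduction} supplies an orientation $D$ with $|C|\ge10\ge5$. The unconditional and non-complete assertions of Lemma~\ref{lem:dense} then give $\Dsat(H)$ in the cases $D=\comp H$ and $D=H$, respectively.
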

\begin{proof}
Combine Proposition~\ref{prop:dense-reduction} with the two orientations of Lemma~\ref{lem:dense}.
\end{proof}

\section{Three exceptional targets}\label{sec:exceptions}

Let $P_4[a,b,c,d]$ denote the graph obtained from the consecutive vertices of a four-vertex path by replacing them with cliques of orders $a,b,c,d$, respectively, and joining consecutive cliques completely. The classification in the next section leaves three patterns requiring the constructions below.

\begin{proposition}\label{prop:exceptions}
Each of
\[
 P_4[2,2,1,2],\qquad P_4[2,2,2,1],\qquad P_4[2,2,2,2]
\]
has a finite deletion host.
\end{proposition}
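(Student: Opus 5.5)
The plan is to settle each of the three targets with an explicit finite host checked by certificate, because the structural tools above do not reach them. The first step is to record why they fail. Each $P_4[a,b,c,d]$ is an interval graph, hence chordal. In a chordal graph, every $2$-connected induced subgraph on at least four vertices has an adjacent two-vertex cut. So no admissible protector $C$ exists for Theorem~\ref{thm:deletion-criterion}. The complements lie in $\Rclass$ by the reduction of Section~\ref{sec:core}, so Corollary~\ref{cor:outside-R} does not apply either. I would also rule out the line-graph inputs (5) and (6) of Section~\ref{sec:prelim}, by checking via Beineke's characterization whether some $H+f$ is a line graph. If one of the three targets does qualify, $L(K_{2h})$ settles it immediately, and only the remaining targets need hosts.

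For the remaining targets I would search among highly symmetric graphs, so that the saturation condition only needs checking on one representative edge per edge orbit. Concretely, I would search circulants $\mathrm{Cay}(\mathbb Z_n,S)$ with $n$ up to about $30$, together with halved cubes, Johnson graphs and products of cliques. These are the families in which the earlier constructions lived. In a circulant, the edge orbits under rotation and reflection correspond to the connection lengths in $S$. A host therefore needs one marked witness per orbit, and by Lemma~\ref{lem:marked} each witness is a copy of $H+e$ containing the deleted edge $e$ in the prescribed position. A useful heuristic for the search is that $H+e$ for the critical edge $e$ joins, for instance, a vertex of the $A$-clique to one of the $C$-clique. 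So the host should contain many copies of that slightly denser interval graph, while every $\omega$-large clique chain in it has a ``shortcut''.

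The certificate for each host then consists of two parts. First, an exhaustive check that the host has no induced $P_4[\cdot]$ pattern. This can be organized by enumerating pairs of adjacent cliques $B\cup C$ of order $4$ (or $3$) and checking extensions, which keeps the search small. Second, for each edge-orbit representative $e$, an explicit $h$-vertex set inducing the target in $G-e$. Vertex- and edge-orbit computations reduce the verification to a few representatives, and the same checker as in Appendix~\ref{app:repro} reconstructs and confirms everything.

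I expect the main obstacle to be finding the hosts themselves, particularly for $P_4[2,2,2,2]$, where the host must be free of an $8$-vertex target with rich clique structure and yet every edge must be critical. I would first try circulants of order $30$, and then other vertex-transitive graphs if needed. If the symmetric searches fail for one pattern, the fallback is a heuristic local search: start from a union of copies of $H+e$ glued along the marked edge, and repeatedly add edges to destroy induced copies of $H$ while preserving the witnesses.
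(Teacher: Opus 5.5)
\textbf{There is a genuine gap: no host is ever produced.} What you have written is a plan for finding hosts, not a proof. The proposition is exactly the claim that specific finite hosts exist, and none is exhibited. Proposing to search circulants of order about $30$, halved cubes and Johnson graphs, with a local-search fallback, proves nothing until a particular graph is named, shown to be induced-$H$-free, and given a deletion witness for every edge orbit.

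The fallback is not a sound procedure either. Each added edge needs its own witness, and since induced-$H$-freeness is not monotone, adding edges can create induced copies as well as destroy them.

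The missing content is the hosts themselves:
\begin{itemize}
\item For $P_4[2,2,2,1]$ one can take $\Delta_5$. By edge-transitivity a single explicit witness after deleting one edge suffices. This must be paired with an exhaustive check of all $\binom{16}{7}$ seven-vertex subsets.
\item For $P_4[2,2,2,2]$ one can take the circulant on $\mathbb Z/30\mathbb Z$ with connection set $\{\pm1,\pm2,\pm5,\pm7,\pm8,\pm9,\pm10\}$. This needs one witness per distance class and an exhaustive check of all $\binom{30}{8}$ eight-vertex subsets.
\end{itemize}

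\textbf{Your diagnosis of which tools fail is also partly wrong.} The claim that the structural tools do not reach these targets is false for $P_4[2,2,1,2]$, which needs no search at all. Its singleton clique is a cut vertex, and the graph is $T_4$ with a triangle glued at the degree-two vertex of $T_4$. By Proposition~\ref{prop:TM}, $\Delta_4\cong K_{2,2,2,2}$ is a deletion host for $T_4$. Every vertex of $\Delta_4$ lies in a triangle, so any prescribed triangle vertex can be sent to any prescribed host vertex. Hence $T_4$ is a governing block and \cite[Theorem~4.1]{FHHS} applies. You checked the deletion criterion, Corollary~\ref{cor:outside-R} and the line-graph inputs, but overlooked input~(2) of Section~\ref{sec:prelim}.

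Separately, your assertion that every $2$-connected induced subgraph of a chordal graph on at least four vertices has an adjacent two-vertex cut is false. $K_4$, which occurs in all three targets, has no vertex cut at all, and $K_5-e$ is a non-complete counterexample. The correct reason Theorem~\ref{thm:deletion-criterion} has no protector here is twofold. First, a clique cannot be destroyed by adding a nonedge. Second, in these particular interval graphs every minimal separator of a connected induced subgraph lies inside the second or third clique. So every non-complete $2$-connected induced subgraph has an adjacent two-vertex cut.
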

\begin{proof}
The graph $P_4[2,2,1,2]$ is formed by identifying the degree-two vertex of $T_4$ with a vertex of a triangle. By Proposition~\ref{prop:TM}, $\Delta_4$ is a deletion host for $T_4$. Every vertex of $\Delta_4\cong K_{2,2,2,2}$ lies in a triangle, and any specified vertex of a triangle can be mapped to it. Thus $T_4$ is a governing block, and~\cite[Theorem~4.1]{FHHS} supplies the host.

For $P_4[2,2,2,1]$, the host is $\Delta_5$. Label its vertices $0,\ldots,15$ by increasing binary encodings of the even subsets of $[5]$. After deleting the edge $\{0,1\}$, the four consecutive cliques can be taken as
\begin{equation}\label{eq:halfcube5-witness}
 \{0,6\},\qquad\{2,3\},\qquad\{1,11\},\qquad\{13\}.
\end{equation}
The certificate enumerates all $\binom{16}{7}=11,440$ vertex subsets and finds no induced target before deletion. It also gives and checks a full witness for each of the $80$ edges.

For $P_4[2,2,2,2]$, use the circulant graph $G_{30}$ defined by
\begin{equation}\label{eq:circulant}
 V(G_{30})=\mathbb Z/30\mathbb Z,\qquad
 x\sim y\ \Longleftrightarrow\ x-y\in
 \{\pm1,\pm2,\pm5,\pm7,\pm8,\pm9,\pm10\}.
\end{equation}
This graph has $210$ edges. Exhaustive verification of its $\binom{30}{8}=5,852,925$ eight-vertex subsets finds no induced target. Table~\ref{tab:circulant} gives deletion witnesses for all seven distance classes; translation and reflection cover every edge. The certificate additionally expands and checks all $210$ edge witnesses, checking every vertex pair of each proposed induced copy.

For completeness, the exhaustive recognition in these two cases is described immediately after the proof, and the data and checker are specified in Appendix~\ref{app:repro}.
\end{proof}

\begin{table}[htbp]
\centering
\caption{Consecutive cliques of an induced $P_4[2,2,2,2]$ after deleting $\{0,s\}$ from $G_{30}$.}\label{tab:circulant}
\begin{tabular}{rcccc}\toprule
$s$ & First & Second & Third & Fourth\\\midrule
1 & $\{0,20\}$ & $\{22,29\}$ & $\{1,24\}$ & $\{3,26\}$\\
2 & $\{0,28\}$ & $\{20,21\}$ & $\{11,12\}$ & $\{2,4\}$\\
5 & $\{0,1\}$ & $\{21,22\}$ & $\{12,13\}$ & $\{5,4\}$\\
7 & $\{0,20\}$ & $\{28,29\}$ & $\{7,6\}$ & $\{14,16\}$\\
8 & $\{0,2\}$ & $\{22,23\}$ & $\{13,15\}$ & $\{8,6\}$\\
9 & $\{0,22\}$ & $\{1,2\}$ & $\{9,11\}$ & $\{16,18\}$\\
10 & $\{0,22\}$ & $\{1,2\}$ & $\{10,11\}$ & $\{18,19\}$\\\bottomrule
\end{tabular}
\end{table}

The explicit-host verifier is separate from the search code. For each vertex subset in the two path-blowup cases, it first checks the degree multiset. It then partitions the vertices by equality of their closed neighborhoods. These are the true-twin classes. An induced $P_4[2,2,2,2]$ has four such classes of size two, with quotient graph $P_4$. Conversely, those two properties characterize the target, since each class is a clique and adjacency between distinct classes is uniform. The same recognition applies to $P_4[2,2,2,1]$, with the singleton class at a path endpoint. The verifier checks $T_4$ in $\Delta_4$ by direct comparison over all vertex permutations of each five-vertex subset. Table~\ref{tab:hosts} gives the complete verification totals.

\begin{table}[htbp]
\centering
\caption{Exact verification of the three explicit base hosts. Every target count before deletion is zero.}\label{tab:hosts}
\begin{tabular}{@{}llrrr@{}}\toprule
Target & Host & Order & Subsets & Edge witnesses\\\midrule
$T_4$ & $\Delta_4$ & 8 & 56 & 24\\
$P_4[2,2,2,1]$ & $\Delta_5$ & 16 & 11,440 & 80\\
$P_4[2,2,2,2]$ & $G_{30}$ & 30 & 5,852,925 & 210\\\midrule
\multicolumn{3}{l}{Total} & 5,864,421 & 314\\\bottomrule
\end{tabular}
\end{table}

\section{Certified finite classification}\label{sec:finitecases}

By~\cite[Theorem~1.9]{FHHS}, targets on at most six vertices require no further analysis. Corollaries~\ref{cor:outside-R} and~\ref{cor:large} reduce the remaining problem to the non-complete members of $\Rclass$ on $7$ through $11$ vertices. We explain both why the enumeration covers every such graph and how each retained case is certified.

\subsection{Exhaustive hereditary augmentation}

Start with the one-vertex graph. At order $n$, for every retained representative at order $n-1$, add a new vertex with each of the $2^{n-1}$ possible neighborhoods. Reject a graph if it or its complement contains a non-complete $3$-connected induced subgraph on at least five vertices. Retain representatives of the remaining graphs up to isomorphism and complementation.

\begin{lemma}\label{lem:coverage}
This augmentation procedure represents every graph in $\Rclass$ at each order, up to isomorphism and complementation.
\end{lemma}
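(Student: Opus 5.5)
We argue by induction on the order $n$ that every graph in $\Rclass$ of order $n$ is isomorphic to a retained representative, or to the complement of one. At order $1$ the single vertex is retained, and its complement is itself, so the base case holds.

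For the inductive step, let $H\in\Rclass$ have order $n\ge2$. Choose any vertex $v$ and put $H'=H-v$. Since $\Rclass$ is hereditary, $H'\in\Rclass$. By the induction hypothesis there is a retained representative $R$ at order $n-1$ together with an isomorphism $\sigma$ from $H'$ to $R$ or from $H'$ to $\comp R$. The vertices of $R$ and $\comp R$ are the same, so $\sigma$ transports the neighborhood $N_H(v)\cap V(H')$ to a subset $X\subseteq V(R)$.

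Two cases arise.
\begin{enumerate}
\item If $\sigma$ maps onto $R$, the procedure adds a new vertex to $R$ with neighborhood exactly $X$. This is one of the $2^{n-1}$ choices, and the resulting graph is isomorphic to $H$.
\item If $\sigma$ maps onto $\comp R$, the procedure adds a new vertex to $R$ with neighborhood $V(R)\setminus X$. The complement of the resulting graph is the one-vertex extension of $\comp R$ by a vertex with neighborhood $X$, which is isomorphic to $H$.
\end{enumerate}
In either case the generated candidate $G$ satisfies $G\cong H$ or $\comp G\cong H$. Since $\Rclass$ is closed under complementation, $G\in\Rclass$, so the rejection test does not discard it. Thus some isomorphism-and-complement class representative of $G$ is retained, and hence so is one for $H$.

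The only delicate point is ensuring that retaining one representative per class under the combined equivalence loses nothing. This is exactly why the induction hypothesis is phrased up to complementation and why the case split above uses complemented neighborhoods. One should also note that the rejection test is exact membership in $\Rclass$. The test inspects all induced subgraphs of both $G$ and $\comp G$, so it is invariant under isomorphism and complementation and never rejects a member of $\Rclass$. The correctness of the computer implementation of this test and of the canonical-form computation is a separate matter, deferred to the certificates.
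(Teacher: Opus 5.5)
Your proposal is correct and follows the paper's proof: induction on the order, deletion of a vertex, heredity of $\Rclass$, reconstruction of $H$ or $\comp H$ from a retained representative, and complement-closure of $\Rclass$ to guarantee the candidate is not rejected. Your explicit choice of the complemented neighborhood $V(R)\setminus X$ spells out the step the paper compresses into ``in the latter one reconstructs $\comp H$.''
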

\begin{proof}
The assertion holds at order one. Let $H\in\Rclass$ have order $n$, and remove a vertex $v$. Heredity gives $H-v\in\Rclass$. By induction, either $H-v$ or its complement is isomorphic to a retained representative. In the former case one of the enumerated neighborhoods reconstructs $H$; in the latter one reconstructs $\comp H$. Since the class is complement-closed, this graph is not rejected. Its retained equivalence class therefore represents $H$.
\end{proof}

Canonical labeling is used to generate the data, but it is not trusted by the coverage verifier. For every parent graph and every neighborhood, the coverage certificate contains either a rejecting vertex set and orientation or the encoding of a retained representative. The verifier checks that every possible neighborhood occurs exactly once. It directly checks each rejecting witness and checks each accepted graph for isomorphism to the specified representative or its complement, using a separate isomorphism routine. It also checks membership of the retained representatives in $\Rclass$.

In total, $87,286$ augmentation records are verified: $83,518$ have explicit rejection witnesses and $3,768$ are accepted by isomorphism or complement-isomorphism. At orders two through seven, the retained lists are additionally compared with the complete small-graph atlas supplied by NetworkX~\cite{NetworkX}. This is a cross-check; the coverage argument itself is Lemma~\ref{lem:coverage} together with the per-augmentation certificates.

\subsection{Certificates for retained targets}

For each retained representative of order $7$ through $11$, both orientations are classified separately. Each non-complete orientation has one of the following certificates:
\begin{enumerate}
\item a dense-template decomposition as in Lemma~\ref{lem:dense}, in the target or its complement;
\item the hypotheses of one of the two protection criteria, either with the whole oriented graph as protector or with a specified induced protector and a specified modifying pair;
\item two leaves at distance $2$ or $3$;
\item a reduction by deleting vertices of degree at most one to a non-complete $2$-connected graph of order at most six;
\item a simple line-graph root, an explicit vertex-to-root-edge correspondence, and a severed pair;
\item a proof that the target is not a line graph and an explicit line-graph root after adding a specified nonedge;
\item an isomorphism to one of the three targets in Proposition~\ref{prop:exceptions}.
\end{enumerate}
An application of the addition criterion is made to the complement of the target and then converted by~\eqref{eq:duality}. The fourth rule uses the small-graph result and~\cite[Corollary~4.2]{FHHS}. The other external rules are precisely those recorded in Section~\ref{sec:prelim}.

The classification verifier directly tests all separator hypotheses. For a proper protector $C$, it enumerates every vertex subset of the modified seed of order $|V(C)|$ and tests induced isomorphism. This verifies that the edit destroys \emph{all} induced copies of $C$. A line-graph certificate is checked by comparing, for every pair of target vertices, adjacency in the target with intersection of the corresponding root edges.

The negative condition in the near-line-graph rule is checked by exhaustive clique-partition search. We include the justification to specify exactly what is verified.

\begin{lemma}[Clique-partition characterization]\label{lem:krausz}
A simple graph is a line graph of a simple graph if and only if its edges can be partitioned into cliques so that each vertex lies in at most two of the cliques.
\end{lemma}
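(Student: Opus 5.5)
This is Krausz's characterization, and I would prove the two directions separately, using the edge-to-vertex correspondence of the line graph in each.

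\emph{Necessity.} Suppose $G=L(F)$ for a simple graph $F$. For each vertex $v$ of $F$, let $Q_v$ be the set of edges of $F$ incident with $v$. Every two members of $Q_v$ share the endpoint $v$, so $Q_v$ is a clique of $G$. An edge of $G$ joins two edges of $F$ that share an endpoint. Because $F$ is simple, two distinct edges share at most one endpoint. Hence each edge of $G$ lies in exactly one of the cliques $Q_v$, and the $Q_v$ of size at least two partition $E(G)$. A vertex of $G$ is an edge $uv$ of $F$, so it lies only in $Q_u$ and $Q_v$, that is, in at most two cliques.

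\emph{Sufficiency.} Let $\mathcal Q$ be a partition of $E(G)$ into cliques with each vertex in at most two members. For a vertex $x$ lying in fewer than two members of $\mathcal Q$, add singleton cliques $\{x\}$ until $x$ lies in exactly two. These singletons cover no edges, so the family is still an edge partition. An isolated vertex receives two distinct new singletons.

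Build $F$ with one vertex per member of the enlarged family. Join the two members containing $x$ by an edge $e_x$ of $F$.

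We must check three things.
\begin{enumerate}
\item $F$ has no loops. This holds because the two cliques containing $x$ are distinct members of the family.
\item $F$ has no multiple edges. Suppose $e_x$ and $e_y$ had the same two endpoints $Q,Q'$ with $x\neq y$. Then $x$ and $y$ both lie in $Q$ and in $Q'$. If $xy$ is an edge, it lies in two members of the partition, which is impossible. If $xy$ is not an edge, then $x,y$ cannot share even one clique of size at least two. They could share a singleton only if $x=y$, since singletons are private to their vertex. So no multiple edges arise.
\item $x\mapsto e_x$ is an isomorphism $G\to L(F)$. It is a bijection onto $E(F)$ by construction. If $xy\in E(G)$, its unique clique contains both $x$ and $y$, so $e_x$ and $e_y$ share that endpoint. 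Conversely, if $e_x$ and $e_y$ share an endpoint $Q$, then $x,y\in Q$. With $x\ne y$, the clique $Q$ has size at least two and is a clique of $G$, so $xy$ is an edge.
\end{enumerate}

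The only delicate step is the simplicity of $F$, in particular handling the singleton padding so that it introduces neither loops nor parallel edges. Using fresh, distinct singletons for each vertex settles this.

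For the verifier's purpose, sufficiency also yields an explicit root to check. Necessity justifies concluding, once the exhaustive search over edge-clique partitions with the degree-two condition finds none, that the graph is not a line graph.
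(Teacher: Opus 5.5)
Your proof is correct and follows essentially the same route as the paper: for necessity, you take the cliques $Q_v$ of root edges at each root vertex, and for sufficiency, one root vertex per partition clique, where your singleton padding is a uniform way of producing the paper's new leaves and isolated root edges. Your case analysis ruling out parallel edges, and your check of the isomorphism $x\mapsto e_x$, are somewhat more explicit than the paper's one-line justification.
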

\begin{proof}
In a line graph of a simple graph, the sets of edges incident with each root vertex give the required clique partition; cliques with fewer than two vertices may be omitted. Conversely, make a root vertex for each clique in such a partition. A vertex of the original graph belonging to two cliques becomes an edge joining their root vertices; one belonging to a single clique becomes an edge to a new leaf; one belonging to no clique becomes an isolated edge in the root. Two different original vertices cannot give parallel root edges, since their mutual edge would then belong to two partition cliques. The root is simple and has the original graph as its line graph.
\end{proof}

The verifier searches all eligible clique partitions. Failure to find one is therefore a complete check of the negative condition, rather than reliance on an unsuccessful heuristic recognizer.

\begin{proposition}[Certified residual classification]\label{prop:finite-classification}
Every non-complete member of $\Rclass$ on $7$ through $11$ vertices has a finite deletion host.
\end{proposition}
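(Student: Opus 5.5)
The plan is to combine Lemma~\ref{lem:coverage} with a per-target soundness check of the seven certificate rules. Let $H$ be a non-complete member of $\Rclass$ on $n$ vertices, $7\le n\le 11$. By Lemma~\ref{lem:coverage}, $H$ or $\comp H$ is isomorphic to a retained representative $R$ of order $n$. Both orientations $R$ and $\comp R$ are classified, so whichever of them is isomorphic to $H$ is non-complete and carries one of the listed certificates. Since $\Dsat$ is invariant under isomorphism, it then suffices to show that each certificate type, once verified, implies $\Dsat$ of the oriented graph it is attached to.

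I would go through the rules in order.
\begin{enumerate}
\item A dense-template decomposition gives $\Dsat$ directly by Lemma~\ref{lem:dense}. This uses its second assertion when the target itself is the template $D$, which is non-complete. It uses its first assertion when the complement of the target is the template. The verifier confirms $M\ge5$ and the neighbourhood bounds.
\item A protection certificate gives $\Dsat$ through Theorem~\ref{thm:deletion-criterion} or Corollary~\ref{cor:whole-graph}. For the addition criterion, Theorem~\ref{thm:addition-criterion} is applied to $\comp H$ and converted by~\eqref{eq:duality}. The hypotheses that are actually checked are:
\begin{itemize}
\item connectivity and the absence of a cut vertex, and of a two-vertex cut of the relevant type, in $C$;
\item induced-$C$-freeness of the modified seed, by exhaustive enumeration of subsets.
\end{itemize}
\item Two leaves at distance $2$ or $3$ give $\Dsat$ by~\cite[Theorem~5.1]{FHHS}.
\item A reduction by deleting vertices of degree at most one ends at a non-complete $2$-connected core $H_0$ on at most six vertices. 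Here $\Dsat(H_0)$ holds by~\cite[Theorem~1.9]{FHHS}. Reversing the deletions adds vertices with at most one earlier neighbour, so~\cite[Corollary~4.2]{FHHS} lifts the host to $H$.
\item A verified line-graph root with a severed pair gives $\Dsat$ by~\cite[Theorem~5.2]{FHHS}. The verifier checks that the root is simple, that vertices correspond bijectively to root edges with adjacency equal to edge intersection, and that the two leaves are nonadjacent, have distinct neighbours, and do not lie in claw components.
\item A near-line-graph certificate gives $\Dsat$ by~\cite[Lemma~A.4(1)]{FHHS}. The negative hypothesis is exactly the failure of the exhaustive clique-partition search, which is sound by Lemma~\ref{lem:krausz}.
\item An isomorphism to one of the three targets of Proposition~\ref{prop:exceptions} gives $\Dsat$ by that proposition.
\end{enumerate}

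The main obstacle is not any single mathematical step but the completeness claim: every retained non-complete orientation of order $7$ to $11$ really receives a valid certificate. This cannot be argued by hand. It rests on the classification verifier running over all $3{,}768$ accepted representatives, or rather those of order $7$ to $11$, and both orientations of each.

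I would therefore present the proof as three parts.
\begin{itemize}
\item Coverage, via Lemma~\ref{lem:coverage} and the per-augmentation certificates.
\item Rule soundness, as above.
\item Reported verification totals showing that no non-complete orientation lacks a certificate.
\end{itemize}
A subtle point to address explicitly is that the same host criterion must be applied to the correct orientation. An addition-type certificate found for $\comp H$ yields $\Dsat(H)$, not $\Dsat(\comp H)$. I would also state that the complete orientation, when it occurs, needs no certificate, since it is excluded from the statement.
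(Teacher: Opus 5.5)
Your proposal is correct and follows the paper's proof. Both combine coverage via Lemma~\ref{lem:coverage} and the per-augmentation certificates, soundness of each of the seven certificate rules via the cited or proved results, and the verified totals showing that all $633$ non-complete oriented rows are certified. One factual correction: $3{,}768$ counts accepted augmentation records across all orders from $2$ to $11$, not distinct representatives; the classification actually runs over the $319$ representatives of orders $7$ through $11$, i.e.\ $638$ oriented rows, five of which are complete.
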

\begin{proof}[Certificate verification]
The coverage certificates establish Lemma~\ref{lem:coverage} through order $11$. The classification checker verifies a valid certificate of one of the preceding kinds for each non-complete orientation of each retained representative. The complete counts are in Table~\ref{tab:residual}. There are $638$ oriented rows, of which five are explicitly excluded complete graphs. Every one of the other $633$ rows has a verified certificate. Each rule implies the existence of a finite deletion host by the cited or proved results, including Proposition~\ref{prop:exceptions}. Hence every target covered by the augmentation has such a host.
\end{proof}

\begin{table}[htbp]
\centering
\caption{Complete classification of the residual class. Representatives are taken up to isomorphism and complementation; oriented rows are not asserted to be distinct isomorphism classes.}\label{tab:residual}
\begin{tabular}{rrrrr}\toprule
Order & Representatives & Oriented rows & Complete & Unresolved\\\midrule
7 & 142 & 284 & 1 & 0\\
8 & 101 & 202 & 1 & 0\\
9 & 31 & 62 & 1 & 0\\
10 & 23 & 46 & 1 & 0\\
11 & 22 & 44 & 1 & 0\\\midrule
Total & 319 & 638 & 5 & 0\\\bottomrule
\end{tabular}
\end{table}

\section{Proof of the main theorem and further questions}\label{sec:conclusion}

\begin{proof}[Proof of Theorem~\ref{thm:main}]
If $H$ is complete, an induced copy of $H$ appearing after an edge deletion would already have been a clique before deletion. Thus no $H$-free graph with an edge is a deletion host for $H$. This also excludes the complete graphs of orders zero and one under the usual induced-subgraph conventions.

Now suppose $H$ is non-complete, and put $h=|V(H)|$. If $h\le6$, use~\cite[Theorem~1.9]{FHHS}. Suppose $h\ge7$. If $H\notin\Rclass$, Corollary~\ref{cor:outside-R} applies. If $H\in\Rclass$ and $h\ge12$, use Corollary~\ref{cor:large}. The only remaining case is $H\in\Rclass$ with $7\le h\le11$, which is Proposition~\ref{prop:finite-classification}. These cases cover every finite non-complete graph, including disconnected graphs and graphs with isolated vertices.
\end{proof}

\begin{corollary}\label{cor:addition-main}
A finite simple graph has a finite addition host if and only if it has at least one edge.
\end{corollary}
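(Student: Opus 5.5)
The corollary follows from Theorem~\ref{thm:main} by complementation, using the duality~\eqref{eq:duality}. I will check the two directions separately and keep track of the non-degeneracy clauses in the two definitions.

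\emph{Sufficiency.} Let $H$ have at least one edge. Then $\comp H$ is not complete, so Theorem~\ref{thm:main} gives a finite deletion host $G$ for $\comp H$. I will set $J=\comp G$. Then $J$ is $H$-free, because $G$ is $\comp H$-free. Also $J$ is non-complete, because $E(G)\ne\varnothing$. A nonedge $f$ of $J$ is an edge of $G$. Since $G-f$ contains an induced $\comp H$, its complement $J+f$ contains an induced $H$. So $J$ is an addition host, and $\Asat(H)$ holds.

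\emph{Necessity.} Suppose $H$ is edgeless and $J$ is an addition host for $H$. Since $J$ is non-complete, it has a nonedge $f$, and $J+f$ contains an induced copy of $H$ on some set $S$. By Lemma~\ref{lem:marked}, both endpoints of $f$ lie in $S$. Then $(J+f)[S]$ contains the edge $f$, so it is not edgeless and cannot be isomorphic to $H$. This is a contradiction.

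The only point needing care is the degenerate orders $0$ and $1$, where $H$ is both complete and edgeless. There I will reuse the convention stated in the proof of Theorem~\ref{thm:main}. By complementation it says that these graphs admit no addition host, which matches the statement. I do not expect a real obstacle; the only thing to confirm is that the side conditions ($E(G)\ne\varnothing$ and non-completeness of $J$) correspond to each other exactly under complementation.
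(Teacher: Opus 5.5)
Your proof is correct and follows the paper's route. The paper applies Theorem~\ref{thm:main} to $\comp H$ and invokes~\eqref{eq:duality}; your sufficiency paragraph is that duality unpacked, with $E(G)\ne\varnothing$ correctly matched to non-completeness of $J$. Your necessity argument via Lemma~\ref{lem:marked} is the complemented form of the paper's complete-target argument, and it already covers orders $0$ and $1$, since no non-complete graph is $K_0$- or $K_1$-free. Your separate remark on those cases is therefore harmless but unnecessary.
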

\begin{proof}
Apply Theorem~\ref{thm:main} to its complement and use~\eqref{eq:duality}.
\end{proof}

The proof establishes finite existence without a practical upper bound on the order of a host. In particular, the finite groups furnished by Theorem~\ref{thm:lifting} are not constructed in the certificate computations. Explicit or quantitative bounds for these hosts would strengthen the result. It would also be useful to replace the residual classification, and especially the exceptional circulant construction, by a uniform graph-theoretic argument.

The role of the separator hypotheses is essential to the present method. For example, two $4$-cycles with edge sets
\[
 \{01,12,23,30\}\quad\text{and}\quad\{01,14,45,50\}
\]
are individually $P_4$-free, but their union along the common edge $01$ contains the induced path $2,1,4,5$. Thus arbitrary edge amalgamation does not preserve induced-subgraph exclusion. Similarly, a finite initial part of an infinite extension need not preserve a deletion witness at every edge. The finite lifting theorem and the marked automorphisms address these two separate requirements.

The computational proof uses exact discrete checks. The verifiers are separate from the generators and search routines, but this is algorithmic separation, not third-party peer review or formal verification in a proof assistant. The argument relies on the external theorems explicitly cited above and on the finite certificate checks described here.

\appendix
\section{Certificates and reproducibility}\label{app:repro}

The accompanying directory \file{certificates/} (distributed under \file{anc/} in the arXiv source package) contains the finite data, verification source code, execution results and a checksum manifest. Verification requires Python~3.10 or later, NetworkX, and a C++17 compiler available as \file{g++}. From that directory, run
\begin{quote}
\file{python3 run_all.py}
\end{quote}
with assertions enabled. The Python entry points reject optimized execution that disables assertions. The command verifies existing certificates; no search for a new host and no construction of a finite quotient group is needed.

\subsection{Graph encodings and coverage records}

A labeled graph on $\{0,\ldots,n-1\}$ is encoded by an integer whose successive bits correspond to
\[
 (0,1);\quad(0,2),(1,2);\quad(0,3),(1,3),(2,3);\quad\ldots.
\]
A bit equals one precisely when its pair is an edge. The files \file{residual_n.txt} store the retained encodings at each order.

Each line of \file{augmentation_coverage.jsonl} has the form
\[
 [n,\mathrm{parent},\mathrm{neighborhood},S,\mathrm{value}].
\]
The added vertex is $n-1$. If $S\ne0$, it encodes a rejecting induced vertex set and the value specifies the original or complement orientation. If $S=0$, the value is an accepted representative encoding. The verifier checks all parent-neighborhood pairs, all rejection hypotheses, and all accepted isomorphism claims. The resulting summary is \file{coverage_verification.json}.

\subsection{Structural and classification data}

Each line of \file{structural_bipartite_cert.txt} records $a$, the nondecreasing list of missing-pair indices for the $b$ vertices of the other part, and a bit mask for a $3$-connected induced vertex set. Pairs of the labeled $a$-set are indexed lexicographically. The checker independently regenerates the full multiset list and requires a valid certificate for every input.

The file \file{classification.json} records the order, representative, orientation, target encoding, proof rule and rule-specific data for all $638$ oriented targets. The checker does not import the classifier or the canonical-labeling routine. Its output is \file{classification_verification.json}.

\subsection{Explicit hosts and output validation}

The file \file{explicit_hosts.jsonl} contains the target edge set, the host edge set, and a full vertex map for every marked edge of each of the three hosts in Table~\ref{tab:hosts}. Each map is checked on all unordered vertex pairs, after deletion of its marked edge. The verifier requires all host edges to occur exactly once as marked edges.

The exhaustive C++ checker examines exactly the expected binomial number of vertex subsets for each of the two path-blowup hosts; the $T_4$ host is checked directly in Python. The hardened wrapper requires exactly one result for each C++ input host, the exact set of expected names, no duplicates, the correct subset count, and an integer zero for the number of induced target copies. It also requires a verified outcome for all three hosts. It rejects missing or empty subprocess output even when the subprocess exits successfully. This wrapper incorporates the output-validation repair identified in the supplied audit materials.

The four verification groups have the following totals:
\begin{center}
\begin{tabular}{@{}lr@{}}\toprule
Verification group & Checked objects\\\midrule
Bipartite structural inputs & 143,231\\
Augmentation records & 87,286\\
Oriented classification rows & 638\\
Explicit-host vertex subsets & 5,864,421\\
Explicit marked-edge witnesses & 314\\\bottomrule
\end{tabular}
\end{center}
The aggregate results, runtime versions and elapsed times are stored in \file{all_checks.json}, and a full run log is included. Checksum manifests describe the distributed files; rerunning the verification can update output logs and timing fields. Compiled executables are temporary products and are not mathematical inputs.


\begin{thebibliography}{99}
\raggedright

\bibitem{ABO}
K.~Auinger, J.~Bitterlich and M.~Otto,
\emph{Finite approximation of free groups II: The Theorems of Ash, Herwig--Lascar and Ribes--Zalesskii---revisited and strengthened},
preprint, arXiv:2507.11685v5 (2026).
\url{https://arxiv.org/abs/2507.11685v5}.

\bibitem{AC}
M.~Axenovich and M.~Csik\'os,
\emph{Induced saturation of graphs},
Discrete Math.\ \textbf{342} (2019), no.~4, 1195--1212.
\url{https://doi.org/10.1016/j.disc.2019.01.006}.

\bibitem{BE}
S.~Behrens, C.~Erbes, M.~Santana, D.~Yager and E.~Yeager,
\emph{Graphs with induced-saturation number zero},
Electron. J. Combin.\ \textbf{23} (2016), no.~1, Paper~P1.54, 23~pp.
\url{https://doi.org/10.37236/5095}.

\bibitem{BGJMS}
M.~Bonamy, C.~Groenland, T.~Johnston, N.~Morrison and A.~Scott,
\emph{Infinite induced-saturated graphs},
Canad. J. Math.\ (2026), First View, 1--31.
\url{https://doi.org/10.4153/S0008414X26102132}.
Result numbering in this paper follows the preprint,
\url{https://arxiv.org/abs/2506.08810v3}.

\bibitem{Dvorak}
V.~Dvo\v{r}\'ak,
\emph{$P_n$-induced-saturated graphs exist for all $n\ge6$},
Electron. J. Combin.\ \textbf{27} (2020), no.~4, Paper~P4.43, 6~pp.
\url{https://doi.org/10.37236/9579}.

\bibitem{FHHScycles}
X.~Fan, Sahab Hajebi, Sepehr Hajebi and S.~Spirkl,
\emph{Halfway to induced saturation for even cycles},
preprint, arXiv:2505.24100v2 (2025).
\url{https://arxiv.org/abs/2505.24100v2}.

\bibitem{FHHS}
X.~Fan, Sahab Hajebi, Sepehr Hajebi and S.~Spirkl,
\emph{Asymmetric induced saturation},
preprint, arXiv:2606.24763v1 (2026).
\url{https://arxiv.org/abs/2606.24763v1}.

\bibitem{NetworkX}
A.~A.~Hagberg, D.~A.~Schult and P.~J.~Swart,
\emph{Exploring network structure, dynamics, and function using NetworkX},
in Proceedings of the 7th Python in Science Conference,
G.~Varoquaux, T.~Vaught and J.~Millman (eds.),
Pasadena, CA, 2008, pp.~11--15.
\url{https://doi.org/10.25080/TCWV9851}.

\bibitem{Herwig}
B.~Herwig,
\emph{Extending partial isomorphisms on finite structures},
Combinatorica \textbf{15} (1995), no.~3, 365--371.
\url{https://doi.org/10.1007/BF01299742}.

\bibitem{HL}
B.~Herwig and D.~Lascar,
\emph{Extending partial automorphisms and the profinite topology on free groups},
Trans. Amer. Math. Soc.\ \textbf{352} (2000), no.~5, 1985--2021.
\url{https://doi.org/10.1090/S0002-9947-99-02374-0}.

\bibitem{Hrushovski}
E.~Hrushovski,
\emph{Extending partial isomorphisms of graphs},
Combinatorica \textbf{12} (1992), no.~4, 411--416.
\url{https://doi.org/10.1007/BF01305233}.

\bibitem{MS}
R.~R.~Martin and J.~J.~Smith,
\emph{Induced saturation number},
Discrete Math.\ \textbf{312} (2012), no.~21, 3096--3106.
\url{https://doi.org/10.1016/j.disc.2012.06.015}.

\end{thebibliography}
\end{document}